\documentclass[11pt]{article}
\usepackage{latexsym,bm}
\usepackage{mathrsfs}
\usepackage{amsmath,amsthm}
\usepackage{graphicx}
\usepackage{amssymb}
\usepackage{CJK}
\usepackage{epstopdf}
\usepackage{color}
\usepackage{cite}
\usepackage{array}
\usepackage{tikz}
\usepackage {indentfirst}

\usetikzlibrary{positioning} 
\usepackage{xcolor}

\theoremstyle{plain}
\newtheorem{thm}{Theorem}[section]
\newtheorem{cor}[thm]{Corollary}
\newtheorem{lem}[thm]{Lemma}
\newtheorem{prop}[thm]{Proposition}
\theoremstyle{definition}

\theoremstyle{plain}

\theoremstyle{problem}

\theoremstyle{plain}

\theoremstyle{plain}

\theoremstyle{plain}

\DeclareGraphicsExtensions{.eps,.eps.gz}
\DeclareGraphicsRule{*}{eps}{*}{}
\oddsidemargin=0 cm
\topmargin=0 cm
\usepackage[top=2.5cm,bottom=2.5cm,left=2.8cm,right=2.2cm]{geometry}
\textwidth=16 true cm \textheight=23 true cm
\normalsize \rm
\parindent=16pt
\DeclareGraphicsRule{*}{eps}{*}{}  \linespread{1.2}

\allowdisplaybreaks[4]
\makeatletter
\@addtoreset{equation}{section}
\makeatother

\begin{document}

\begin{center}
{\huge On spanning tree edge dependences of graphs} \\[18pt]
{\Large Yujun Yang*\footnotetext{*Corresponding author at E-mail address: yangyj@yahoo.com}, Can Xu, }\\[6pt]
{ \footnotesize School of Mathematics and Information Science, Yantai University, Yantai, 264005 P.R. China}
\end{center}
\vspace{1mm}
\begin{abstract}
Let $\tau(G)$ and $\tau_G(e)$ be the number of spanning trees of a connected graph $G$ and the number of spanning trees of $G$ containing edge $e$. The ratio $d_{G}(e)=\tau_{G}(e)/\tau(G)$ is called the spanning tree edge density of $e$, or simply density of $e$. The maximum density $\mbox{dep}(G)=\max\limits_{e\in E(G)}d_{G}(e)$ is called the spanning tree edge dependence of $G$, or simply dependence of $G$. Given a rational number $p/q\in (0,1)$, if there exists a graph $G$ and an edge $e\in E(G)$ such that $d_{G}(e)=p/q$, then we say the density $p/q$ is constructible. More specially, if there exists a graph $G$ such that $\mbox{dep}(G)=p/q$, then we say the dependence $p/q$ is constructible. In 2002, Ferrara, Gould, and Suffel raised the open problem of which rational densities and dependences are constructible. In 2016, Kahl provided constructions that show all rational densities and dependences are constructible. Moreover, He showed that all rational densities are constructible even if $G$ is restricted to bipartite graphs or planar graphs. He thus conjectured that all rational dependences are also constructible even if $G$ is restricted to bipartite graphs (Conjecture 1), or planar graphs (Conjecture 2). In this paper, by combinatorial and electric network approach, firstly, we show that all rational dependences are constructible via bipartite graphs, which confirms the first conjecture of Kahl. Secondly, we show that all rational dependences are constructible for planar multigraphs, which confirms Kahl's second conjecture for planar multigraphs. However, for (simple) planar graphs, we disprove the second conjecture of Kahl by showing that the dependence of any planar graph is larger than $\frac{1}{3}$. On the other hand, we construct a family of planar graphs that show all rational dependences $p/q>\frac{1}{2}$ are constructible via planar graphs.
\vspace{1mm}

\noindent {\bf Keywords}: spanning tree; spanning tree edge density; spanning tree edge dependence; resistance distance; random walks on graphs
\end{abstract}

\section{Introduction}
Let $G$ be a connected graph. A spanning tree of $G$ is a spanning subgraph that is a tree. The number of spanning trees, denoted by $\tau(G)$, of a graph $G$ is the total number of distinct spanning subgraphs of $G$ that are trees. The number of spanning trees is an important structural graph invariant, which has significant application in network theory, physics, chemistry and engineering. Due to this reason, it has been widely studied for many years. It is well known that Kirchhoff's matrix tree theorem has established a formula for computing the number of spanning trees for a general graph \cite{kir}. Since the pioneering work of Cayley \cite{cay} who first determined the number of spanning trees of complete graphs, the number of spanning trees has been computed for various interesting families of graphs. For example, the number of spanning trees have been computed for complete bipartite graphs \cite{sco,sbe}, complete multipartite graphs \cite{aus,lew}, cubic cycle $C_N^3$ and the quadruple cycle $C_N^4$ \cite{yta}, graphs formed from a complete graph by deleting branches forming disjoint $K$-partite subgraphs \cite{neil}, multi-star related graphs \cite{nr}, $K_n$-complements of quasi-threshold graphs \cite{nr1}, circulant graphs\cite{zyg,lpw,clz,gyy,gyz,lcry}, $K_n$-complements of asteroidal graphs \cite{npp}, graphs with rotational symmetry \cite{yz},  $K_n^m\pm G$ graphs \cite{nr2},  irregular line graphs \cite{yan} and line graphs \cite{dy,gj}, self-similar fractal models \cite{my}, 2-separable networks \cite{ly}, a type of generalized Farey graphs \cite{zyan}, nearly complete bipartite graphs \cite{gd}, Bruhat graph of the symmetric group\cite{moy}, and so on.

In this paper, we concentrate on the number of spanning trees containing a specific edge. For $e\in E(G)$, let $\tau_{G}(e)$ denote the number of spanning trees of $G$ which contain $e$. The ratio $d_{G}(e)=\tau_{G}(e)/\tau(G)$ is called the \textit{spanning tree edge density} \cite{fgs} of $e$, or simply density of $e$. The \textit{spanning tree edge dependence} \cite{fgs} of $G$, or simply dependence of $G$, denoted by $\mbox{dep}(G)$, is defined as $\mbox{dep}(G)=\max_{e\in G}d_{G}(e)$. The spanning tree edge density and dependence have important applications in networks. For instance, if we construct a network $\mathcal{N}$ by replacing each edge of $G$ with a unit resistor, then for any edge $e=uv$, the density of $e$ is equal to the resistance distance between $u$ and $v$ in $\mathcal{N}$. Here, the \textit{resistance distance} between any two vertices $i$ and $j$ in a connected graph $G$, denoted by $\Omega(i,j)$, is defined as the net effective resistance between them in $\mathcal{N}$\cite{kr}.

It is obvious that $0<d_G(e)\leq 1$ and $d_G(e)=1$ if and only if $e$ is a cut edge. Let $p/q$ be a positive rational number, $p<q$. If there exists a graph $G$ with edge
$e\in E(G)$ such that $d_G(e) = p/q$, we say that the spanning tree edge density $p/q$ is constructible. Similarly, if there exists a
graph $G$ with $\mbox{dep}(G)=p/q$, we say that the spanning tree edge dependence $p/q$ is constructible.

In 2002, Ferrara, Gould and Suffel \cite{fgs} proposed the following realizability open problems.

\textbf{Problems.} Which rational spanning tree edge densities are constructible? More specifically, which rational spanning tree edge dependences are constructible?

In 2016, Nathan Kahl \cite{nka} solved the problems by ingenious construction of graph families. By constructing necklace graphs with complete graphs (the definition is given in Section 2), he proved that all rational spanning tree edge densities and dependences are constructible, even if $G$ is restricted to claw-free graphs. In addition, by constructing necklace graphs with complete bipartite graphs, he proved that all rational spanning tree edge densities are constructible even if $G$ is restricted to bipartite graphs. By constructing general theta graphs (the definition is given in Section 3), he proved that all rational spanning tree edge densities are constructible even if $G$ is restricted to planar graphs.

Undoubtedly, it is usually very challenging to determine which edge has the maximum density in a graph. Due to this reason, it remains unknown whether all rational spanning tree edge dependences are constructible if $G$ is restricted to bipartite graphs or planar graphs, although Kahl has shown that all rational spanning tree edge densities are constructible with these graphs. So, Kahl proposed the following conjectures.

\textbf{Conjecture 1.} Let $p,q$ be positive integers, $p<q$. There exists some function $f(p,q)$ such that, if $G$ is the bipartite construction of Theorem 2.1 (see Section 2), then $t_i\geq f(p,q)$ for all $2\leq i \leq n$ implies that $\mbox{dep}(G)=d_G(e_1)$.

\textbf{Conjecture 2.} Let $p,q$ be positive integers, $p<q$. There exists a planar graph $G$ such that $\mbox{dep}(G)=p/q$.

In the present paper, firstly, we do find such a function $f(p,q)$ that, if $G$ is the bipartite construction of Theorem \ref{th1}, then $t_i\geq f(p,q)$ for all $2\leq i \leq n$ implies that $\mbox{dep}(G)=d_G(e_1)$, which confirms Conjecture 1. Secondly, by electrical network approach, we show that Conjecture 2 is true for planar multigraphs. However, for (simple) planar graphs, we disprove the second conjecture of Kahl by showing that the dependence of any planar graph is larger than $\frac{1}{3}$. Hence all rational dependences $p/q$ for $p/q\leq \frac{1}{3}$ are not constructible. On the other hand, we construct a family of planar graphs that show all rational dependences $p/q>\frac{1}{2}$ are constructible via planar graphs.

\section{Constructing spanning tree edge dependences via bipartite graphs}
In this section, we show that Conjecture 1 is true. To this end, we use the same bipartite construction as given in \cite{nka}. So, first of all, we introduce the construction of necklace graphs.

Informally speaking, the necklace graph is obtained from a cycle by replacing each edge of the cycle with a graph. Precisely, let $\{G_{i}\}_{i=1}^{n}$ be a sequence of graphs, and in each graph $G_i$, we choose an edge $e_{i}=u_{i}v_{i}$. Then the \textit{necklace graph} $G=N(G_{1}(e_{1}),\cdots, G_{n}(e_{n}))$ is the graph obtained from $\{G_{i}\}_{i=1}^{n}$ by connecting these $n$ edges in turn according to the sequence of the graph in which they are lied, that is, identify each $v_{i}$ and $u_{i+1}$(the indices are modulo $n$). If there is no confusion, for the sake of simplicity, we write $G=N(G_{1},\cdots, G_{n})$ instead. Let $K_{r,s}$ be the complete bipartite graph with two partite sets having $r$ and $s$ vertices, respectively. Then the bipartite construction used in this section is the necklace graph $G=N(K_{r_1,s_1},K_{r_2,s_2}\cdots, K_{r_n,s_n})$ constructed from complete bipartite graphs. Clearly, if $n$ is even, then $G=N(K_{r_1,s_1},K_{r_2,s_2}\cdots, K_{r_n,s_n})$ is a bipartite graph. For example, the necklace graph $G=N(K_{1,1},K_{2,3},K_{3,3},K_{3,4})$ is shown in figure 1.

Using the bipartite construction, Kahl proved that for any positive rational number $0<p/q<1$, the spanning tree edge density $p/q$ is constructible, as stated in the following theorem.

\begin{thm}\label{th1}\cite{nka}
Let $p,\ q$ be positive integers, $p< q$, let $t_{i}$, $2\leq i\leq n$, be positive integers such that $$\sum\limits_{i=2}\limits^{n}\frac{1}{t_{i}}=\frac{p}{q-p}.$$ Let $r_1=s_1=1$ and, for all $2\leq i\leq n$, let $r_{i}=2t_{i}$ and $s_{i}=2t_{i}-1$. Then in $G=N(K_{r_1,s_1}, K_{r_{2},s_{2}},\ \cdots,\ K_{r_{n},s_{n}})$, we have $d_{G}(e_{1})=p/q$.
\end{thm}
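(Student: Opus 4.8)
The plan is to bypass direct spanning-tree enumeration and instead exploit the electrical identity recorded in the introduction: if $\mathcal{N}$ is the network obtained from $G$ by replacing each edge with a unit resistor, then $d_G(e_1)=\Omega(u_1,v_1)$, the resistance distance between the two endpoints of $e_1$. Thus it suffices to compute a single effective resistance in $\mathcal{N}$. The structure of the necklace makes this computation a routine series--parallel reduction once the contribution of each building block is known.

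First I would compute the effective resistance $R_i$ across the distinguished edge $e_i=u_iv_i$ inside a single block $K_{r_i,s_i}$, viewed on its own as a two-terminal network with terminals $u_i,v_i$. Since the resistance between the endpoints of an edge equals the density of that edge, $R_i=d_{K_{r_i,s_i}}(e_i)$. Because $K_{r,s}$ is edge-transitive, every edge lies in the same number of spanning trees; counting incidences between the $rs$ edges and the $\tau(K_{r,s})=r^{s-1}s^{r-1}$ spanning trees, each of which has $r+s-1$ edges, gives
$$d_{K_{r,s}}(e)=\frac{\tau_{K_{r,s}}(e)}{\tau(K_{r,s})}=\frac{r+s-1}{rs}.$$
Substituting $r_i=2t_i$ and $s_i=2t_i-1$ yields the clean value $R_i=\dfrac{4t_i-2}{2t_i(2t_i-1)}=\dfrac{1}{t_i}$ for $2\le i\le n$, while the trivial block $K_{1,1}$ contributes $R_1=1$.

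Next I would assemble the blocks. In the necklace each identified vertex $v_i=u_{i+1}$ is a cut vertex, so the blocks $G_2,\dots,G_n$ are connected in series between $v_1=u_2$ and $u_1=v_n$, and this series chain forms a single branch in parallel with the block $G_1$ (the edge $e_1$ itself) between $u_1$ and $v_1$. Writing $S=\sum_{i=2}^n R_i=\sum_{i=2}^n \frac{1}{t_i}=\frac{p}{q-p}$ for the series resistance, the parallel law gives
$$\Omega(u_1,v_1)=\frac{R_1\,S}{R_1+S}=\frac{1\cdot\frac{p}{q-p}}{1+\frac{p}{q-p}}=\frac{p}{q}.$$
Combined with $d_G(e_1)=\Omega(u_1,v_1)$, this is exactly the claimed density.

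The only step needing care is the justification of the series--parallel reduction, i.e. that the resistance of the whole necklace between the terminals of $e_1$ really decomposes as $R_1$ in parallel with $R_2+\cdots+R_n$. This rests on two standard network facts: effective resistances add across cut vertices (series law), which applies to the chain $G_2,\dots,G_n$ because the gluing points are cut vertices; and two two-terminal networks sharing only their terminals combine by the reciprocal rule (parallel law), which applies because $G_1$ and the chain meet only at $u_1,v_1$. An equivalent, purely combinatorial route would express $\tau(G)$ and $\tau_G(e_1)$ multiplicatively through the blocks via deletion--contraction, but the electrical argument is shorter and makes the role of the identity $R_i=1/t_i$ transparent.
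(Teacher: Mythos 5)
Your proof is correct, but it takes a different route from the source. The paper itself does not prove Theorem \ref{th1}; it quotes it from Kahl \cite{nka}, and the machinery it imports for exactly this kind of computation is combinatorial: the necklace spanning-tree formula of Lemma \ref{lemma3} combined with the edge-transitive density of Lemma \ref{trans}. That route gives $\tau(G)=\prod_i\tau(G_i)\sum_{i=1}^n d_{G_i}(e_i)$ and $\tau_G(e_1)=\prod_i\tau(G_i)\sum_{i=2}^n d_{G_i}(e_i)$ (the thicket term $b_{G_1}(e_1;u_1,v_1)$ vanishes since no spanning thicket of $K_{1,1}$ separating $u_1,v_1$ can contain $e_1$), whence $d_G(e_1)=S/(1+S)$ with $S=\sum_{i=2}^n d_{K_{r_i,s_i}}(e_i)=\sum_{i=2}^n 1/t_i=p/(q-p)$; this is precisely how the paper proves its planar analogue, Theorem \ref{d{G}(e)}. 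You reach the same expression $S/(1+S)$ electrically, via $d_G(e)=\Omega(u,v)$, the series law at cut vertices, and the parallel law for networks sharing only their terminals --- laws which are exactly what Lemma \ref{lemma3} encodes combinatorially for necklace graphs. Your computation of the block resistance is also sound: edge-transitivity gives $d_{K_{r,s}}(e)=(r+s-1)/(rs)$, and the choice $r_i=2t_i$, $s_i=2t_i-1$ makes this collapse to $1/t_i$. What your approach buys is brevity and transparency (it explains why the blocks are designed so that each contributes resistance $1/t_i$), and it works uniformly for $n\geq 2$. What the combinatorial approach buys is exact spanning-tree counts rather than a single ratio: the paper needs $\tau_G(xy)$ for every non-key edge $xy$ (Theorems \ref{tm1} and \ref{tm2}) to establish the dependence statement, and a two-terminal resistance computation between $u_1$ and $v_1$ alone cannot deliver those.
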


\begin{figure*}[!ht]
 \centering
 $$\includegraphics[width=5.7cm,height=4.3cm]{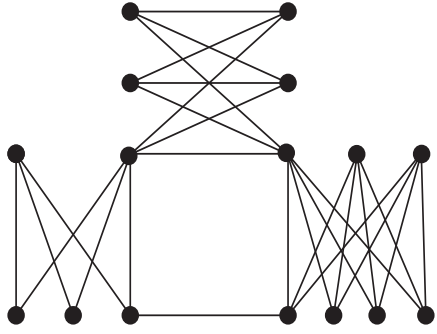}$$\\
 \caption{The necklace graph $G=N(K_{1,1},\ K_{2,3},\ K_{3,3},\ K_{3,4})$.}     \label{Fig 1}
\end{figure*}

To show that the spanning tree edge dependence $p/q$ is constructible in the bipartite construction as given in Theorem \ref{th1}, it is needed to compute densities of all the edges in $G=N(K_{1,1},K_{r_2,s_2}\cdots, K_{r_n,s_n})$. For convenience, we divide all the edges into three groups. The first group consists of all the edges $e_{i}$ ($i=1,\ 2,\cdots,\ n$), these edges are called \textit{key edges} of $G$, and the vertices $u_{i}$ and $v_{i}$ are the \textit{key vertices} of $G$. The second group consists of those non-key edges which are adjacent to key edges, and we call edges in the second group \textit{type 1 edges}. The third group consists of the remaining edges, and these edges are called \textit{type 2 edges}.

Actually, to single out the edge in $G=N(K_{1,1},K_{r_2,s_2}\cdots, K_{r_n,s_n})$ with the maximum spanning tree edge density, it suffices to compute $\tau_G(xy)$ for all the edges $xy\in E(G)$. To compute $\tau_G(xy)$, the following lemma plays an essential role. Note that the \textit{spanning thicket}\cite{bbo} (or \textit{spanning bitree}) of a graph is a spanning forest with exactly two components. If vertices $u$ and $v$ lie in different components of a spanning thicket, then we say the spanning thicket \textit{separates} $u$ and $v$.


\begin{lem}\label{lemma3}\cite{nka}
Let $G=N(G_{1},\cdots,G_{n})$ be a necklace graph. Then
\begin{center}
 $\tau(G)=\prod\limits_{i=1}\limits^{n}\tau(G_{i})\sum\limits_{i=1}\limits^{n}d_{G_{i}}(u_{i}v_{i})$
\end{center}
and, for any $xy\in E(G_{k})$, $1\leq k\leq n$,
\begin{center}
 $\tau_{G}(xy)=\prod\limits_{i=1}\limits^{n}\tau(G_{i})\left[\frac{b_{G_{k}}(xy;\ u_{k},\
  v_{k})}{\tau(G_{k})}+d_{G_{k}}(xy)\sum\limits_{i\neq k}d_{G_{i}}(u_{i}v_{i})\right]$
  \end{center}
 where $b_{G_{k}}(xy;\ u_{k},v_{k})$ is the number of spanning thickets of $G_{k}$ separating $u_{k}$, $v_{k}$ and contain the edge $xy$.
   \end{lem}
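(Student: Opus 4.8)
The plan is to prove Lemma \ref{lemma3} by the deletion-contraction philosophy applied to the gluing structure of the necklace, organizing the count of spanning trees according to how they restrict to each bead $G_i$. The key observation is that when we identify $v_i$ with $u_{i+1}$ to build the cycle, a spanning tree of $G$ must connect all the key vertices, and since the beads are arranged in a cycle, any spanning tree of $G$ restricts on each $G_i$ to either a spanning tree of $G_i$ or a spanning thicket of $G_i$ that separates $u_i$ from $v_i$. First I would make this dichotomy precise: because $G$ is obtained by a cyclic chain of one-vertex identifications, removing the identification structure shows that a subgraph $T\subseteq E(G)$ is a spanning tree if and only if, for exactly one index $j$, the restriction $T\cap E(G_j)$ is a spanning thicket of $G_j$ separating $u_j,v_j$, while for every other index $i\neq j$ the restriction $T\cap E(G_i)$ is a spanning tree of $G_i$. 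Intuitively, the single ``broken'' bead $G_j$ is where the cycle is cut open so that no cycle survives, and the two components of its thicket get stitched into one tree by the spanning trees of the remaining beads.

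With this structural lemma in hand, the first formula follows by summation. I would count spanning trees of $G$ by choosing the distinguished index $j$ and multiplying the relevant bead-counts:
\begin{equation*}
\tau(G)=\sum_{j=1}^{n} s_{G_j}(u_j,v_j)\prod_{i\neq j}\tau(G_i),
\end{equation*}
where $s_{G_j}(u_j,v_j)$ denotes the number of spanning thickets of $G_j$ separating $u_j$ and $v_j$. The essential identity I would invoke here is the classical fact from electrical network / spanning-tree theory that $s_{G_j}(u_j,v_j)=\tau_{G_j}(u_jv_j)$, i.e. the number of two-component spanning forests separating the two endpoints of an edge equals the number of spanning trees containing that edge (each spanning thicket separating $u_j,v_j$ corresponds bijectively to a spanning tree of $G_j$ containing the edge $u_jv_j$, by adding or deleting that edge). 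Substituting $s_{G_j}(u_j,v_j)=\tau_{G_j}(u_jv_j)=d_{G_j}(u_jv_j)\tau(G_j)$ and factoring $\prod_{i=1}^{n}\tau(G_i)$ out of every term yields precisely $\tau(G)=\prod_{i=1}^{n}\tau(G_i)\sum_{i=1}^{n}d_{G_i}(u_iv_i)$.

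For the second formula I would fix an edge $xy\in E(G_k)$ and count spanning trees of $G$ that contain $xy$, again splitting on the location of the unique ``thicket bead'' $j$. There are two cases. If $j=k$, the bead $G_k$ contributes a spanning thicket separating $u_k,v_k$ that \emph{contains} $xy$; the number of these is exactly $b_{G_k}(xy;u_k,v_k)$ by definition, and the other beads each contribute an arbitrary spanning tree, giving the term $b_{G_k}(xy;u_k,v_k)\prod_{i\neq k}\tau(G_i)$. If $j\neq k$, then bead $G_k$ contributes a spanning tree of $G_k$ containing $xy$ (there are $\tau_{G_k}(xy)=d_{G_k}(xy)\tau(G_k)$ of these), bead $G_j$ contributes a separating thicket ($s_{G_j}(u_j,v_j)=d_{G_j}(u_jv_j)\tau(G_j)$ of these), and all remaining beads contribute spanning trees; summing over all $j\neq k$ gives $d_{G_k}(xy)\tau(G_k)\big(\sum_{i\neq k}d_{G_i}(u_iv_i)\tau(G_i)\big)\prod_{\ell\neq k,j}\tau(G_\ell)$, which after factoring collapses to $\prod_{i=1}^n\tau(G_i)\,d_{G_k}(xy)\sum_{i\neq k}d_{G_i}(u_iv_i)$. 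Adding the two cases and pulling out the common factor $\prod_{i=1}^{n}\tau(G_i)$ produces the claimed bracketed expression.

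The main obstacle, and the step deserving the most care, is establishing the structural dichotomy rigorously: namely that a spanning subgraph of the cyclically glued necklace is a tree precisely when exactly one bead restricts to a separating thicket and all others to spanning trees. One must verify both the acyclicity and the connectivity of the reassembled subgraph, tracking how the identifications $v_i=u_{i+1}$ interact with the cyclic topology so that ``too few'' broken beads leaves a surviving cycle while ``too many'' disconnects $G$. Once this is settled, the bijection $s_{G_j}(u_j,v_j)=\tau_{G_j}(u_jv_j)$ and the bookkeeping of the summation are routine.
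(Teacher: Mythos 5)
Your proposal is correct: the structural dichotomy (a spanning tree of the necklace restricts to a spanning tree on all beads except exactly one, where it restricts to a spanning thicket separating the key vertices, which can be made rigorous by the component-count $\sum_i c_i = n+1$ together with the observation that non-key vertices of a bead attach to the rest of $G$ only through that bead), the bijection $s_{G_j}(u_j,v_j)=\tau_{G_j}(u_jv_j)$, and the case split $j=k$ versus $j\neq k$ for the edge formula yield exactly the two stated identities. Note that the paper itself gives no proof of this lemma --- it is quoted from Kahl's paper \cite{nka} --- so there is no in-paper argument to compare against, but your decomposition is the standard one underlying that reference, and indeed the paper reuses your key bijection (delete/add the edge $u_kv_k$ to pass between spanning trees containing it and separating thickets) explicitly in its proof of Theorem \ref{tm1}.
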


In order to compute $\tau_{G}(xy)$ in $G=N(K_{1,1},K_{r_2,s_2}\cdots, K_{r_n,s_n})$ by Lemma \ref{lemma3}, we still need to two results of Ge and Dong in \cite{gd} which gives the number of spanning trees which contain two types of subgraphs of a complete bipartite graph $G=K_{r,s}$. For a graph $G$ and a subgraph $H$ of $G$, we use $\tau_{G}(H)$ to denote the number of spanning trees of $G$ containing $H$. The first result concerns the number of spanning tree of $K_{r,s}$ that contains a given subgraph tree $T$ of $K_{r,s}$.

\begin{lem}\label{lemma1}\cite{gd}
Let $T$ be any tree which is a subgraph of $K_{r,s}$. Then

\begin{equation}
\tau_{K_{r,s}}(T)=(ms+nr-mn)r^{s-n-1}s^{r-m-1},
\end{equation}
where $m=|V(T)\bigcap X|$, $n=|V(T)\bigcap Y|$, and $(X,Y)$ is the bipartition of $K_{r,s}$, with $|X|=r$, and $|Y|=s$.
\end{lem}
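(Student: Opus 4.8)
The plan is to count the spanning trees of $K_{r,s}$ containing $T$ by contracting $T$ to a single vertex and applying the weighted Matrix--Tree Theorem. Since $T$ is a connected acyclic subgraph, contracting all of its edges gives a bijection between the spanning trees of $K_{r,s}$ that contain $T$ and the spanning trees of the contracted multigraph $H:=K_{r,s}/T$; hence $\tau_{K_{r,s}}(T)=\tau(H)$. First I would describe $H$ explicitly. Write $X=A\cup V_X$ and $Y=B\cup V_Y$, where $V_X=V(T)\cap X$ and $V_Y=V(T)\cap Y$ with $|V_X|=m$, $|V_Y|=n$, and set $a:=r-m$, $b:=s-n$ for the numbers of vertices outside $T$. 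After contracting $T$ to a single vertex $w$, the graph $H$ has vertex set $\{w\}\cup A\cup B$ with the following edge multiplicities: each $x\in A$ is joined to $w$ by $n$ parallel edges (one for each vertex of $V_Y$ to which $x$ was adjacent), each $y\in B$ is joined to $w$ by $m$ parallel edges, and $A\cup B$ induces a copy of $K_{a,b}$ with single edges.

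Next I would compute $\tau(H)$ by deleting the row and column indexed by $w$ from the weighted Laplacian of $H$ and taking the determinant, treating parallel edges as integer weights. The resulting reduced Laplacian $L_0$ indexed by $A\cup B$ has the block form
\[
L_0=\begin{pmatrix} \alpha\, I_a & -J_{a\times b}\\[2pt] -J_{b\times a} & \beta\, I_b\end{pmatrix},\qquad \alpha=n+b=s,\quad \beta=m+a=r,
\]
where $I$ and $J$ denote identity and all-ones matrices; the diagonal entries record the weighted degrees ($n+b$ for each $x\in A$, $m+a$ for each $y\in B$) and the off-diagonal $-1$ entries record the single edges of $K_{a,b}$.

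The evaluation of $\det L_0$ is then routine linear algebra. Using the Schur complement with respect to the invertible block $\alpha I_a$, together with $J_{b\times a}J_{a\times b}=a\,J_{b}$, gives $\det L_0=\alpha^{a}\det\!\big(\beta I_b-\tfrac{a}{\alpha}J_b\big)$; since $J_b$ has eigenvalues $b$ (once) and $0$ (with multiplicity $b-1$), this equals $\alpha^{a-1}\beta^{b-1}(\alpha\beta-ab)$. Substituting $\alpha=s$, $\beta=r$, $a=r-m$, $b=s-n$ and expanding $sr-(r-m)(s-n)=ms+nr-mn$ yields
\[
\tau_{K_{r,s}}(T)=s^{\,r-m-1}r^{\,s-n-1}(ms+nr-mn),
\]
which is exactly the claimed identity; the degenerate case where $T$ is a single vertex recovers $\tau(K_{r,s})=r^{s-1}s^{r-1}$, a useful sanity check.

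The conceptual work lives in the setup rather than the algebra: the main point to get right is the bookkeeping of edge multiplicities produced by the contraction, namely that external $X$-vertices acquire $n$ parallel edges to $w$ and external $Y$-vertices acquire $m$. The striking collapse $\alpha=n+b=s$ and $\beta=m+a=r$ -- which is what makes the final formula factor so cleanly -- hinges on precisely these counts. Once the block Laplacian is correctly assembled, the Schur-complement computation and the simplification to the factor $ms+nr-mn$ are entirely mechanical.
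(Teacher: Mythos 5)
Your proof is correct, but note that the paper itself contains no proof of this lemma to compare against: it is quoted directly from Ge and Dong \cite{gd}, and the authors use it as a black box. Your argument is a valid, self-contained derivation of exactly the statement needed. The contraction bijection $\tau_{K_{r,s}}(T)=\tau(K_{r,s}/T)$ is sound, with one point worth making explicit: edges of $K_{r,s}$ joining two vertices of $V(T)$ that are not edges of $T$ become loops under the contraction, and they can be discarded because no spanning tree containing $T$ can use such an edge (it would close a cycle with the path in $T$ between its endpoints). Your multiplicity bookkeeping is right (each external $X$-vertex gets $n$ parallel edges to the contracted vertex $w$, each external $Y$-vertex gets $m$), the reduced Laplacian has the stated block form with diagonal entries $\alpha=n+b=s$ and $\beta=m+a=r$, and the Schur-complement computation $\det L_0=\alpha^{a-1}\beta^{b-1}(\alpha\beta-ab)$ with $a=r-m$, $b=s-n$ yields $(ms+nr-mn)\,r^{s-n-1}s^{r-m-1}$ after the expansion $rs-(r-m)(s-n)=ms+nr-mn$. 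Two cosmetic caveats: the degenerate cases $a=0$ or $b=0$ make one block of $L_0$ empty and deserve a one-line check (the formula still holds), though they never occur in this paper's application, where $T$ is the path $x u_k v_k$ in $K_{r_k,s_k}$ with $r_k\geq 4$ and $s_k\geq 3$; and the single-vertex sanity check recovering $\tau(K_{r,s})=r^{s-1}s^{r-1}$ is a nice touch. Compared with simply citing \cite{gd}, whose paper establishes this and related counts (such as the matching count in the paper's Lemma 2.4) in greater generality, your route buys a short, elementary, and verifiable proof of precisely the special case used here.
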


The second result enumerates the number of spanning trees of $K_{r,s}$ containing a given matching of $K_{r,s}$.

\begin{lem}\label{lemma2}\cite{gd}
For any matching $M$ of size $l$ in $K_{r,s}$, we have
\begin{equation}
 \tau_{K_{r,s}}(M)=(r+s)^{l-1}(r+s-l)r^{s-l-1}s^{r-l-1}.
\end{equation}
   \end{lem}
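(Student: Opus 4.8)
The plan is to reduce the count to a single spanning-tree computation on a contracted multigraph and then extract the answer from its Laplacian spectrum. Since a matching $M$ is a forest, a spanning tree of $K_{r,s}$ contains $M$ if and only if its image in the contracted graph $H:=K_{r,s}/M$ is a spanning tree of $H$; hence $\tau_{K_{r,s}}(M)=\tau(H)$, and it suffices to count spanning trees of $H$. First I would describe $H$ explicitly. Writing $X=\{x_1,\dots,x_r\}$, $Y=\{y_1,\dots,y_s\}$ and $M=\{x_1y_1,\dots,x_ly_l\}$, the contraction merges each pair $x_i,y_i$ into a single vertex $w_i$. Thus $H$ has three vertex classes: the $r-l$ uncovered vertices $A\subseteq X$, the $s-l$ uncovered vertices $B\subseteq Y$, and the $l$ merged vertices $W=\{w_1,\dots,w_l\}$, so $|V(H)|=r+s-l$. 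A short incidence check shows the edge multiplicities are: one edge between each $A$--$B$ pair, each $A$--$W$ pair and each $B$--$W$ pair, and \emph{two} parallel edges between every pair $w_i,w_j$ (coming from $x_iy_j$ and $x_jy_i$), with no loops. In particular each vertex of $A$ has degree $s$, each vertex of $B$ has degree $r$, and each $w_i$ has degree $r+s-2$.

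Next I would compute $\tau(H)=\frac{1}{|V(H)|}\prod_{\lambda\neq 0}\lambda$ from the Laplacian spectrum, exploiting the evident $S_{r-l}\times S_{s-l}\times S_l$ symmetry. The partition $\{A,B,W\}$ is equitable, so the eigenvectors split into two families. Vectors supported on a single class and summing to zero there are annihilated by every all-ones off-diagonal block, so the Laplacian acts as a scalar on them, giving eigenvalue $s$ with multiplicity $r-l-1$ (from $A$), eigenvalue $r$ with multiplicity $s-l-1$ (from $B$), and eigenvalue $r+s$ with multiplicity $l-1$ (from $W$, whose within-class block is $(r+s)I_l-2J_l$ because of the double edges). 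The remaining symmetric subspace, spanned by the indicators of $A,B,W$, yields a $3\times 3$ quotient matrix with rows $\bigl(s,\,-(s-l),\,-l\bigr)$, $\bigl(-(r-l),\,r,\,-l\bigr)$, and $\bigl(-(r-l),\,-(s-l),\,(r+s)-2l\bigr)$.

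The key computation is then the spectrum of this quotient. Its rows sum to zero, so $0$ is an eigenvalue; the remaining two eigenvalues $\lambda_1,\lambda_2$ satisfy $\lambda_1+\lambda_2=\mathrm{tr}=2(r+s-l)$, while the sum of its three principal $2\times 2$ minors simplifies to $(r+s-l)^2$, giving $\lambda_1\lambda_2=(r+s-l)^2$. Hence $\lambda_1=\lambda_2=r+s-l$, a clean double root. Multiplying all nonzero eigenvalues and dividing by $|V(H)|=r+s-l$ yields $\tau(H)=\frac{1}{r+s-l}\,s^{\,r-l-1}r^{\,s-l-1}(r+s)^{\,l-1}(r+s-l)^2=(r+s)^{l-1}(r+s-l)r^{s-l-1}s^{r-l-1}$, which is exactly the claimed formula.

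The main obstacle, and the only genuinely delicate point, is verifying this collapse of the quotient to the double eigenvalue $r+s-l$: it is precisely what makes the product telescope into the stated closed form, and I expect it to hinge on the explicit simplification of the $2\times 2$-minor sum to $(r+s-l)^2$. A secondary issue is boundary behaviour: when $l=r$ or $l=s$ one of the classes $A,B$ is empty and the corresponding multiplicity $r-l-1$ or $s-l-1$ turns negative, so those cases, together with the base case $l=1$ (which already matches Lemma \ref{lemma1} applied to a single edge), must be handled by specializing the spectrum directly. An alternative route is induction on $l$ anchored at Lemma \ref{lemma1}, but each contraction destroys bipartiteness, so the direct spectral computation above appears to be the cleaner path.
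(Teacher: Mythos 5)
Your proof is correct, but note first that the paper itself contains no proof of this lemma: it is quoted verbatim from Ge and Dong \cite{gd}, so there is no internal argument to compare against --- what you have produced is a self-contained derivation of a result the paper only cites. Your route is sound at every step I checked. The reduction $\tau_{K_{r,s}}(M)=\tau(K_{r,s}/M)$ is the standard forest-contraction bijection (valid for multigraph contraction, loops deleted, parallel edges kept); your description of $H=K_{r,s}/M$ (single edges between $A$--$B$, $A$--$W$, $B$--$W$ pairs, doubled edges inside $W$, no loops) is right; the partition $\{A,B,W\}$ is indeed equitable with all blocks of the form $cI+dJ$, so the within-class mean-zero vectors are genuine Laplacian eigenvectors with eigenvalues $s$, $r$, $r+s$ and multiplicities $r-l-1$, $s-l-1$, $l-1$. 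The decisive computation also checks out: with your quotient matrix, the sum of principal $2\times 2$ minors is
\begin{equation*}
l(r+s-l)+(r+s)(r+s-2l)-l(r-l)-l(s-l)=(r+s)^2-2l(r+s)+l^2=(r+s-l)^2,
\end{equation*}
so the characteristic polynomial of the quotient is $\lambda\bigl(\lambda-(r+s-l)\bigr)^2$, and the matrix-tree theorem gives exactly the stated product (a useful sanity check: the eigenvalue sum $2rs-2l$ equals the degree sum of $H$). Your caveat about the degenerate cases $l=r$ or $l=s$ is legitimate but benign: there the relevant class is empty, the quotient drops to $2\times 2$ with single nonzero eigenvalue $r+s-l$, and the formula still holds because the factor $(r+s-l)$ cancels the negative-exponent term; in any event the paper only ever invokes the lemma with $l=2$ and $r_k\geq 4$, $s_k\geq 3$, far from the boundary. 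One stylistic remark: since the lemma sits in a paper that otherwise argues combinatorially (bijections between spanning trees and thickets), a deletion--contraction or bijective proof in the spirit of Ge--Dong would blend in more naturally, but your spectral argument is shorter and fully rigorous.
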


Before computing densities in $G=N(K_{1,1},\ K_{r_{2},s_{2}},\ \ldots,\ K_{r_{n},s_{n}})$, the following lemma is needed.
\begin{lem}\label{trans}\cite{fgs,kr}
 Let $G$ be an edge-transitive graph with $n$ vertices and $m$ edges, then for any edge $e$ in $G$, we have
 \begin{equation}
 d_{G}(e)=\frac{n-1}{m}.
 \end{equation}
   \end{lem}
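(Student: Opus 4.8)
The plan is to combine a symmetry observation with a global double-counting identity. First I would exploit edge-transitivity: by definition, for any two edges $e, e' \in E(G)$ there is an automorphism $\varphi$ of $G$ with $\varphi(e)=e'$. Since $\varphi$ maps spanning trees containing $e$ bijectively to spanning trees containing $e'$, we get $\tau_G(e)=\tau_G(e')$, and hence $d_G(e)=d_G(e')$ for all edges. Thus every edge has a common density, which I may call $d$; it remains only to pin down its value.

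The second ingredient is the identity $\sum_{e\in E(G)}\tau_G(e)=(n-1)\tau(G)$. I would prove this by counting, in two ways, the incident pairs $(e,T)$ where $T$ is a spanning tree and $e$ is an edge of $T$. Summing first over edges gives exactly $\sum_{e\in E(G)}\tau_G(e)$, since $\tau_G(e)$ counts the spanning trees through $e$. Summing first over spanning trees gives $\sum_{T}|E(T)|$; as $G$ is connected on $n$ vertices, each spanning tree has precisely $n-1$ edges, so this sum equals $(n-1)\tau(G)$. Equating the two counts yields the identity.

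Finally I would put the pieces together. By edge-transitivity the left-hand side of the identity collapses to $m\cdot\tau_G(e)$ for any fixed edge $e$, so $m\,\tau_G(e)=(n-1)\tau(G)$, and dividing by $m\,\tau(G)$ gives
\begin{equation}
d_G(e)=\frac{\tau_G(e)}{\tau(G)}=\frac{n-1}{m},
\end{equation}
as claimed.

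Honestly, there is no serious obstacle here: both the symmetry step and the double count are routine, and the result is essentially a specialization of the general fact $\sum_{e}d_G(e)=n-1$ (Foster's theorem, viewing $d_G(e)$ as a resistance distance) to the edge-transitive case. The only point that warrants a line of care is the justification that an automorphism carries spanning-trees-through-$e$ bijectively to spanning-trees-through-$e'$; once that is noted, the computation is immediate.
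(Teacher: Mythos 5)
Your proof is correct, and it is essentially the standard argument behind this cited result: the paper itself gives no proof (the lemma is quoted from the references), but your two ingredients — edge-transitivity forcing $\tau_G(e)=\tau_G(e')$ via the automorphism bijection on spanning trees, and the double-counting identity $\sum_{e\in E(G)}d_G(e)=n-1$ — are exactly how it is derived there. Note in particular that the identity you establish by counting pairs $(e,T)$ is precisely Foster's first formula, which the paper records separately as Lemma~\ref{foster}, so your argument is the same route the paper's own machinery presupposes.
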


Now we are ready to compute the number of spanning trees containing a non-key edge $e$ in $G=N(K_{1,1},\ K_{r_{2},s_{2}},\ \ldots,\ K_{r_{n},s_{n}})$. For $2\leq k\leq n$, suppose that the two partitions of $K_{r_k,s_k}$ are $X_k$ and $Y_k$, $|X_k|=r_k$ and $|Y_k|=s_k$, $u_k\in X_k$ and $v_k\in Y_k$.

\begin{thm}\label{tm1}
Let $G=N(K_{1,1},\ K_{r_{2},s_{2}},\ \ldots,\ K_{r_{n},s_{n}})$. If $xy\in E(K_{r_{k},s_{k}})$ is a type 1 edge of $G$, then
\begin{equation}\label{e21}
\tau_{G}(xy)= \left\{ \begin{array}{rr}\prod \limits_{i=1}\limits^{n}r_{i}^{s_{i}-1}s_{i}^{r_{i}-1}\left[\frac{r_{k}+s_{k}-1}{r_{k}s_{k}}\sum \limits_{i=1}\limits^{n}\frac{r_{i}+s_{i}-1}{r_{i}s_{i}}-\frac{(r_{k}-1)^{2}}{r_{k}^{2}s_{k}^{2}}\right], & \{x,y\}\cap \{u_k,v_k\}=\{u_k\},\\
  \prod \limits_{i=1}\limits^{n}r_{i}^{s_{i}-1}s_{i}^{r_{i}-1}\left[\frac{r_{k}+s_{k}-1}{r_{k}s_{k}}\sum \limits_{i=1}\limits^{n}\frac{r_{i}+s_{i}-1}{r_{i}s_{i}}-\frac{(s_{k}-1)^{2}}{r_{k}^{2}s_{k}^{2}}\right], & \{x,y\}\cap \{u_k,v_k\}=\{v_k\}.
  \end{array}\right.
\end{equation}
If $xy\in E(K_{r_{k},s_{k}})$ is a type 2 edge of $G$, then
 \begin{equation}\label{e22}
\tau_{G}(xy)=\prod \limits_{i=1}\limits^{n}r_{i}^{s_{i}-1}s_{i}^{r_{i}-1}\left[\frac{r_{k}+s_{k}-1}{r_{k}s_{k}}\sum \limits_{i=1}\limits^{n}\frac{r_{i}+s_{i}-1}{r_{i}s_{i}}-\frac{1}{r_{k}^{2}s_{k}^{2}}\right].
  \end{equation}
\end{thm}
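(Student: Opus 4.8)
The plan is to apply the necklace formula of Lemma \ref{lemma3} with $G_i = K_{r_i,s_i}$ and to evaluate each of its ingredients explicitly. Rewriting the bracketed factor of Lemma \ref{lemma3} as
$$\frac{b_{G_k}(xy; u_k, v_k)}{\tau(G_k)} + d_{G_k}(xy)\Bigl[\sum_{i=1}^n d_{G_i}(u_i v_i) - d_{G_k}(u_k v_k)\Bigr],$$
I see that three quantities must be pinned down: the product $\prod_i \tau(G_i)$, the densities $d_{G_i}(u_i v_i)$ and $d_{G_k}(xy)$, and the thicket count $b_{G_k}(xy; u_k, v_k)$. The first two are routine, while the last carries all the combinatorial content.

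For the routine part, I would use the classical value $\tau(K_{r,s}) = r^{s-1} s^{r-1}$, so that $\prod_{i=1}^n \tau(G_i) = \prod_{i=1}^n r_i^{s_i-1} s_i^{r_i-1}$, which is exactly the prefactor appearing in both \eqref{e21} and \eqref{e22}. Since $K_{r,s}$ is edge-transitive with $r+s$ vertices and $rs$ edges, Lemma \ref{trans} gives $d_{K_{r,s}}(e) = \frac{r+s-1}{rs}$ for every edge $e$; hence $d_{G_i}(u_i v_i) = \frac{r_i+s_i-1}{r_i s_i}$ and, crucially, $d_{G_k}(xy) = \frac{r_k+s_k-1}{r_k s_k}$ as well, \emph{independently} of whether $xy$ is of type 1 or type 2. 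This already produces the term $\frac{r_k+s_k-1}{r_k s_k}\sum_i \frac{r_i+s_i-1}{r_i s_i}$ together with a residual $-\bigl(\frac{r_k+s_k-1}{r_k s_k}\bigr)^2$.

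The heart of the argument, and the main obstacle, is computing $b_{G_k}(xy;u_k,v_k)$, the number of spanning thickets of $K_{r_k,s_k}$ that separate $u_k$ and $v_k$ and contain $xy$. The key observation is a bijection: adjoining the key edge $u_k v_k$ to such a thicket reconnects its two components into a spanning tree containing both $u_k v_k$ and $xy$, and deleting $u_k v_k$ from any spanning tree containing it reverses this. Hence $b_{G_k}(xy;u_k,v_k) = \tau_{K_{r_k,s_k}}(\{u_k v_k, xy\})$, the number of spanning trees of $K_{r_k,s_k}$ containing the two-edge subgraph $\{u_k v_k, xy\}$. Now the type distinction becomes decisive. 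If $xy$ is a type 1 edge it shares exactly one endpoint ($u_k$ or $v_k$) with $u_k v_k$, so $\{u_k v_k, xy\}$ is a path, i.e. a tree on three vertices, and I apply Lemma \ref{lemma1}: in the $\{u_k\}$ case $(m,n)=(1,2)$ and in the $\{v_k\}$ case $(m,n)=(2,1)$. If $xy$ is a type 2 edge it is vertex-disjoint from $u_k v_k$, so $\{u_k v_k, xy\}$ is a matching of size $l=2$, and I apply Lemma \ref{lemma2}.

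Finally I would substitute these values into Lemma \ref{lemma3} and simplify. Dividing the thicket count by $\tau(G_k)=r_k^{s_k-1}s_k^{r_k-1}$ and combining with the $-\bigl(\frac{r_k+s_k-1}{r_k s_k}\bigr)^2$ term, the cleanup reduces to the difference-of-squares identities $(r_k+s_k-1)^2-(r_k-1)^2 = s_k(2r_k+s_k-2)$, $(r_k+s_k-1)^2-(s_k-1)^2 = r_k(r_k+2s_k-2)$, and $(r_k+s_k-1)^2-1=(r_k+s_k)(r_k+s_k-2)$, which convert the three cases into the correction terms $\frac{(r_k-1)^2}{r_k^2 s_k^2}$, $\frac{(s_k-1)^2}{r_k^2 s_k^2}$, and $\frac{1}{r_k^2 s_k^2}$ displayed in \eqref{e21} and \eqref{e22}. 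The only genuine difficulty is securing the thicket-to-spanning-tree bijection cleanly; once $b_{G_k}(xy;u_k,v_k)$ is identified with $\tau_{K_{r_k,s_k}}(\{u_k v_k, xy\})$, the Ge--Dong lemmas do the rest.
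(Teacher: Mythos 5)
Your proposal is correct and follows essentially the same route as the paper's proof: both apply Lemma \ref{lemma3}, compute $\tau(K_{r,s})=r^{s-1}s^{r-1}$ and $d_{K_{r,s}}(e)=\frac{r+s-1}{rs}$ via edge-transitivity, identify $b_{K_{r_k,s_k}}(xy;u_k,v_k)$ with $\tau_{K_{r_k,s_k}}(\{u_kv_k,xy\})$ through the add/delete-$u_kv_k$ bijection, and then invoke Lemma \ref{lemma1} for the path (type 1) and Lemma \ref{lemma2} for the size-2 matching (type 2). Your parameter choices $(m,n)=(1,2)$, $(2,1)$, $l=2$ and the difference-of-squares simplifications reproduce exactly the correction terms in Eqs.\ \eqref{e21} and \eqref{e22}.
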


\begin{proof}
 We first consider the case that $xy\in E(K_{r_{k},s_{k}})$ is a type 1 edge of $G$. By Lemma~\ref{lemma3}, we know that
 \begin{equation}\label{e23}
 \tau_{G}(xy)=\prod \limits_{i=1}\limits^{n}\tau(K_{r_{i},s_{i}})\left[\frac{b_{K_{r_{k},s_{k}}}(xy;\ u_{k},v_{k})}{\tau(K_{r_{k},s_{k}})}+d_{K_{r_{k},s_{k}}}(xy)\sum\limits_{i\neq k}d_{K_{r_{i},s_{i}}}(u_{i}v_{i})\right].
 \end{equation}
 For the complete bipartite graph $K_{r,s}$, it has been obtained in \cite{fs} that
 \begin{equation}\label{e24}
 \tau(K_{r,s})=r^{s-1}s^{r-1}.
 \end{equation}
 In addition, since $K_{r,s}$ is edge-transitive, the spanning tree edge density of any edge $e$ of $K_{r,s}$ is
 \begin{equation}\label{e25}
 d_{K_{r,s}}(e)=\frac{r+s-1}{rs}.
 \end{equation}
 Now consider $b_{K_{r_{k},s_{k}}}(xy;\ u_{k},v_{k})$. Since $xy$ is a type 1 edge, $xy$ is adjacent to the key edge $u_kv_k$. Suppose that $y=u_k$ is the common end-vertex of $xy$ and $u_kv_k$. Then the path $P=xyv_k$ is tree which is a subgraph of $K_{r_{k},s_{k}}$. For each spanning tree $T$ of $K_{r_{k},s_{k}}$ containing $P$, if we delete $u_kv_k$ from $T$, then we could obtain a spanning thicket $B$ of $K_{r_{k},s_{k}}$ separating $u_k$ and $v_k$ that contains the edge $xy$. Conversely, for each spanning thicket $B$ of $K_{r_{k},s_{k}}$ separating $u_k$ and $v_k$ that contains the edge $xy$, if we add the edge $u_kv_k$ to $B$, then we obtain a spanning tree $T$ of $K_{r_{k},s_{k}}$ containing $P$. It establishes a one to one correspondence between the set of spanning trees of $K_{r_{k},s_{k}}$ containing $P$ and the set of spanning thickets of $K_{r_{k},s_{k}}$ separating $u_k$ and $v_k$ that contains the edge $xy$. Hence
 \begin{equation}\label{e26}
 b_{K_{r_{k},s_{k}}}(xy;\ u_{k},v_{k})=\tau_{K_{r_{k},s_{k}}}(P).
 \end{equation}
Then by Lemma \ref{lemma1}, we have
\begin{equation}\label{e27}
\tau_{K_{r_k,s_k}}(P)=(s_k+2r_k-2)r_k^{s_k-3}s_k^{r_k-2},
\end{equation}
Substituting results of Eqs. (\ref{e24}), (\ref{e25}), and (\ref{e27}) into Eq. (\ref{e23}), we obtain
\begin{align*}
\tau_{G}(xy)&=\prod \limits_{i=1}\limits^{n}\tau(K_{r_{i},s_{i}})\left[\frac{b_{K_{r_{k},s_{k}}}(xy;\ u_{k},v_{k})}{\tau(K_{r_{k},s_{k}})}+d_{K_{r_{k},s_{k}}}(xy)\sum\limits_{i\neq k}d_{K_{r_{i},s_{i}}}(u_{i}v_{i})\right]\\
&=\prod \limits_{i=1}\limits^{n}r_{i}^{s_{i}-1}s_{i}^{r_{i}-1}\left[\frac{(s_k+2r_k-2)r_k^{s_k-3}s_k^{r_k-2}}{r_k^{s_k-1}s_k^{r_k-1}}+\frac{r_k+s_k-1}{r_ks_k}\sum\limits_{i\neq k}\frac{r_i+s_i-1}{r_is_i}\right]\\
&=\prod \limits_{i=1}\limits^{n}r_{i}^{s_{i}-1}s_{i}^{r_{i}-1}\left[\frac{s_k+2r_k-2}{r_k^2s_k}+\frac{r_k+s_k-1}{r_ks_k}\left(\sum\limits_{i=1}^n\frac{r_i+s_i-1}{r_is_i}-\frac{r_k+s_k-1}{r_ks_k}\right)\right]\\
&=\prod \limits_{i=1}\limits^{n}r_{i}^{s_{i}-1}s_{i}^{r_{i}-1}\left[\frac{r_k+s_k-1}{r_ks_k}\sum\limits_{i=1}^n\frac{r_i+s_i-1}{r_is_i}+\frac{s_k+2r_k-2}{r_k^2s_k}-\left(\frac{r_k+s_k-1}{r_ks_k}\right)^2\right]\\
&=\prod \limits_{i=1}\limits^{n}r_{i}^{s_{i}-1}s_{i}^{r_{i}-1}\left[\frac{r_k+s_k-1}{r_ks_k}\sum\limits_{i=1}^n\frac{r_i+s_i-1}{r_is_i}-\frac{(r_k-1)^2}{r_k^2s_k^2}\right].
\end{align*}
Hence the first equality in Eq. (\ref{e21}) is obtained. The second equality in Eq. (\ref{e21}) could be obtained in the same way.

Now suppose that $xy\in E(K_{r_{k},s_{k}})$ is a type 2 edge of $G$. Then $xy$ and $u_kv_k$ form a matching $M$ of $K_{r_k,s_k}$, where both $x_{k}$ and $y_{k}$ distinct from key vertices $u_{k},\ v_{k}$. It is not difficult to verify that there exists a one to one correspondence between the set of spanning trees of $K_{r_k,s_k}$ containing $M$ and the set of spanning thickets of $K_{r_k,s_k}$ separating $u_k$ and $v_k$ that contains the edge $xy$. Thus by Lemma \ref{lemma2}, we have
\begin{equation}\label{e28}
 b_{K_{r_{k},s_{k}}}(xy;u_{k},v_{k})=\tau_{K_{r_k,s_k}}(M)=(r_k+s_k)(r_k+s_k-2)r_k^{s_k-3}s_k^{r_k-3}.
\end{equation}
Substituting results of Eqs. (\ref{e24}), (\ref{e25}), and (\ref{e28}) into Eq. (\ref{e23}), we obtain
\begin{align*}
\tau_{G}(xy)&=\prod \limits_{i=1}\limits^{n}r_{i}^{s_{i}-1}s_{i}^{r_{i}-1}\left[\frac{(r_{k}+s_{k})(r_{k}+s_{k}-2)r_{k}^{s_{k}-3}s_{k}^{r_{k}-3}}{r_{k}^{s_{k}-1}s_{k}^{r_{k}-1}}+\frac{r_{k}+s_{k}-1}{r_{k}s_{k}}\sum\limits_{i\neq k}\frac{r_{i}+s_{i}-1}{r_{i}s_{i}}\right]\\
&=\prod \limits_{i=1}\limits^{n}r_{i}^{s_{i}-1}s_{i}^{r_{i}-1}\left[\frac{(r_{k}+s_{k})(r_{k}+s_{k}-2)}{r_{k}^{2}s_{k}^{2}}
+\frac{r_{k}+s_{k}-1}{r_{k}s_{k}}\sum\limits_{i=1}^n\frac{r_{i}+s_{i}-1}{r_{i}s_{i}}-\frac{(r_{k}+s_{k}-1)^{2}}{r_{k}^{2}s_{k}^{2}}\right]\\
&=\prod \limits_{i=1}\limits^{n}r_{i}^{s_{i}-1}s_{i}^{r_{i}-1}\left[\frac{r_{k}+s_{k}-1}{r_{k}s_{k}}\sum \limits_{i=1}\limits^{n}\frac{r_{i}+s_{i}-1}{r_{i}s_{i}}-\frac{1}{r_{k}^{2}s_{k}^{2}}\right].
\end{align*}
Hence Eq. (\ref{e22}) is proved.
\end{proof}

To show that all rational spanning tree dependences are constructible via bipartite graphs, the following result is needed.
 \begin{lem}\label{lemma5}\cite{nka}
Let $G=N(G_{1},\cdots,G_{n})$ be a necklace graph with key edges $e_{i}\in E(G_{i})$, $1\leq i \leq n$. Then $d_{G}(e_{k}) \leq d_{G}(e_{l})$ if and only if $d_{G_{k}}(e_{k})\leq d_{G_{l}}(e_{l})$.
   \end{lem}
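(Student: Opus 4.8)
The plan is to derive an explicit closed form for the density $d_G(e_k)$ of each key edge in terms of the component densities, and then to compare these forms directly. First I would specialize Lemma~\ref{lemma3} to the key edge itself, i.e. take $xy = e_k = u_k v_k \in E(G_k)$. The crucial observation is that
$$b_{G_k}(e_k;\ u_k, v_k) = 0,$$
since any spanning thicket that separates $u_k$ and $v_k$ places them in distinct components and therefore cannot contain the edge $u_k v_k$ joining them. Writing $d_i := d_{G_i}(e_i)$ and $S := \sum_{i=1}^{n} d_i$, Lemma~\ref{lemma3} then yields
$$\tau_G(e_k) = \prod_{i=1}^{n} \tau(G_i)\, d_k (S - d_k), \qquad \tau(G) = \prod_{i=1}^{n} \tau(G_i)\, S,$$
so that, after cancellation,
$$d_G(e_k) = \frac{\tau_G(e_k)}{\tau(G)} = \frac{d_k (S - d_k)}{S}.$$

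Next I would compare $d_G(e_k)$ with $d_G(e_l)$. Clearing the common positive denominator $S$, the inequality $d_G(e_k) \leq d_G(e_l)$ becomes $d_k(S - d_k) \leq d_l(S - d_l)$, which rearranges into the factored form
$$(d_k - d_l)(S - d_k - d_l) \leq 0.$$
This single identity is what reduces the statement to a sign analysis and drives both directions of the equivalence at once.

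The remaining and genuinely essential step is to fix the sign of the second factor. Here $S - d_k - d_l = \sum_{i \neq k, l} d_i$, and since every component density $d_i$ is strictly positive, this sum is strictly positive as soon as there is at least one index besides $k$ and $l$, that is, whenever $n \geq 3$. Under this condition the second factor is positive and may be divided out without reversing the inequality, leaving $d_k - d_l \leq 0$, which is exactly $d_{G_k}(e_k) \leq d_{G_l}(e_l)$; both implications then follow immediately. The point to watch — and the main (indeed the only) obstacle — is precisely this sign determination: the map $x \mapsto x(S-x)/S$ is a downward parabola and hence not monotone on all of $(0,S)$, so the equivalence would fail outright without the guarantee $S > d_k + d_l$. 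That guarantee is supplied by the strict positivity of the densities of the remaining components, which is why the claim is phrased for a genuine necklace (with the understood hypothesis $n \geq 3$).
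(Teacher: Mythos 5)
Your proof is correct. Note that the paper itself does not prove this lemma---it is quoted directly from Kahl \cite{nka}---so there is no in-paper argument to compare against; but your derivation is the natural reconstruction from Lemma~\ref{lemma3}: the key observation $b_{G_k}(e_k;u_k,v_k)=0$ (a thicket separating $u_k,v_k$ cannot contain $u_kv_k$) is exactly the specialization the paper itself invokes when writing Eq.~(\ref{e29}), and the factored comparison $(d_k-d_l)(S-d_k-d_l)\le 0$ cleanly settles both directions at once. Your closing remark is a genuine refinement rather than a pedantic aside: the equivalence really does require $\sum_{i\ne k,l}d_i>0$, i.e.\ $n\ge 3$, since for $n=2$ one gets $d_G(e_1)=d_G(e_2)=d_1d_2/(d_1+d_2)$ regardless of whether $d_{G_1}(e_1)$ and $d_{G_2}(e_2)$ differ, so the stated ``if and only if'' would fail there; this hypothesis is implicit in the necklace constructions actually used in the paper.
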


Now we are ready to give the main result of this section.

\begin{thm}\label{tm2}
Let $p,\ q$ be positive integers, $p< q$. Let $t_{i}$, $2\leq i\leq n$, be positive integers such that
$$\sum\limits_{i=2}\limits^{n}\frac{1}{t_{i}}=\frac{p}{q-p}\quad \mbox{and} \quad t_{i}\geq\frac{q}{p}.$$ Let $r_{i}=2t_{i}$ and $s_{i}=2t_{i}-1$, for all $2\leq i\leq n$. Then in $G=N(K_{1,1}, K_{r_{2},s_{2}},\ \cdots,\ K_{r_{n},s_{n}})$, we have $\mbox{dep}(G)=d_{G}(e_{1})=p/q$.
\end{thm}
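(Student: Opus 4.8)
The plan is to prove that the key edge $e_1$ uniquely attains the maximum density in $G$ and that this value is $p/q$. Since Theorem \ref{th1} already records $d_G(e_1)=p/q$ for this construction, the real work is to bound the density of every other edge strictly below $p/q$. Throughout I abbreviate $\delta_i:=d_{K_{r_i,s_i}}(u_iv_i)=\frac{r_i+s_i-1}{r_is_i}$ and $S:=\sum_{i=1}^n\delta_i$.

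First I would compute these quantities explicitly. The factor $K_{1,1}$ gives $\delta_1=1$, while substituting $r_i=2t_i$ and $s_i=2t_i-1$ collapses the ratio to $\delta_i=\frac{4t_i-2}{2t_i(2t_i-1)}=\frac{1}{t_i}$ for $2\le i\le n$. Hence the hypothesis $\sum_{i=2}^n 1/t_i=p/(q-p)$ yields $S=1+\frac{p}{q-p}=\frac{q}{q-p}$. Lemma \ref{lemma3} gives $\tau(G)=\big(\prod_i r_i^{s_i-1}s_i^{r_i-1}\big)S$, and since a spanning thicket separating $u_k,v_k$ cannot contain the edge $u_kv_k$ we have $b_{G_k}(e_k;u_k,v_k)=0$; substituting this into Lemma \ref{lemma3} and dividing by $\tau(G)$ produces the clean formula $d_G(e_k)=\delta_k-\delta_k^2/S$. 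In particular $d_G(e_1)=1-1/S=p/q$, recovering Theorem \ref{th1} independently.

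Next I would dispose of the two edge families separately. For the key edges, Lemma \ref{lemma5} reduces the comparison to the factor densities: since $\delta_1=1\ge\delta_k$ for every $k$, it gives $d_G(e_k)\le d_G(e_1)$ for all $k$, with equality only at $k=1$. For the non-key edges, which all lie in some $K_{r_k,s_k}$ with $k\ge2$ (the factor $K_{1,1}$ contributes no non-key edge), I would divide the three formulas of Theorem \ref{tm1} by $\tau(G)$. They assume the uniform shape $d_G(xy)=\delta_k-\frac{C}{r_k^2s_k^2 S}$, with $C=(r_k-1)^2$, $(s_k-1)^2$, or $1$ according as $xy$ is a type 1 edge meeting $u_k$, a type 1 edge meeting $v_k$, or a type 2 edge. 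Because $t_k\ge q/p>1$ forces $t_k\ge2$, hence $r_k\ge4$ and $s_k\ge3$, each constant $C$ is strictly positive, so every non-key density is strictly less than $\delta_k=1/t_k$.

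The crux is the new hypothesis $t_i\ge q/p$, which is precisely the function $f(p,q)=\lceil q/p\rceil$ demanded by Kahl's Conjecture 1: it gives $\delta_k=1/t_k\le p/q$, so every non-key edge satisfies $d_G(xy)<\delta_k\le p/q=d_G(e_1)$. Combined with the key-edge comparison, this shows $e_1$ is the unique density maximizer and therefore $\mbox{dep}(G)=d_G(e_1)=p/q$. I expect the only point needing care to be the positivity of $C$ for all three edge types — this is exactly where $t_k\ge2$ (and hence $s_k\neq1$) is used to keep the inequality $d_G(xy)<\delta_k$ strict, so that the maximum is realized at $e_1$ and nowhere else.
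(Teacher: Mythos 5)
Your proposal is correct, and it rests on the same pillars as the paper's proof: Theorem \ref{th1} for $d_G(e_1)=p/q$, Lemma \ref{lemma5} to dispose of the key edges (where $t_k\geq 2$ gives $\delta_k<1=\delta_1$), Theorem \ref{tm1} for the non-key edges, and the hypothesis $t_k\geq q/p$ as the crux. Where you genuinely diverge is in how the non-key comparison is organized. The paper first compares tree counts within each factor to argue that type 1 edges are dominated by type 2 edges, so that only type 2 edges need to be compared against $e_1$; it then expands $\tau_G(e_1)-\tau_G(xy)$ for a type 2 edge and verifies positivity by an algebraic computation whose last step uses $1/t_k\leq p/q$ to conclude the bracket is at least $1/(r_k^2s_k^2)>0$. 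You instead normalize all three formulas of Theorem \ref{tm1} by $\tau(G)$ into the single shape $d_G(xy)=\delta_k-C/(r_k^2s_k^2S)$ with $C\in\{(r_k-1)^2,(s_k-1)^2,1\}$, and close with the transitive chain $d_G(xy)<\delta_k=1/t_k\leq p/q=d_G(e_1)$. This treats the three edge types uniformly (no reduction to type 2 needed), makes visible the conceptual reason the construction works --- every non-key edge is strictly below its own factor's key density, which the hypothesis caps at $p/q$ --- and cleanly exhibits $f(p,q)=\lceil q/p\rceil$ as the function demanded by Conjecture 1. The paper's route, while more computational, yields as a by-product the explicit quantitative gap $\tau_G(e_1)-\tau_G(xy)\geq \prod_i r_i^{s_i-1}s_i^{r_i-1}\cdot\frac{1}{r_k^2s_k^2}$, though nothing in the theorem requires it. Both arguments are complete; yours is the shorter and more transparent of the two.
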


\begin{proof}
 Since $\sum\limits_{i=2}\limits^{n}\frac{1}{t_{i}}=\frac{p}{q-p}$ satisfies the condition in Theorem \ref{th1}, we readily have $d_{G}(e_{1})=p/q$. To get the required result, it suffices to show that if, for all $2\leq i\leq n$, $t_{i}\geq\frac{q}{p}$, then $\mbox{dep}(G)=d_G(e_1)$.

 For any key edge $e_k$ ($2\leq k\leq n$), since $t_k\geq q/p> 1$, and $t_k$ is positive integers, we have $t_k\geq 2$. Then it follows that $r_k=2t_k\geq 4$ and $s_k=2t_k-1\geq 3$. Thus,
 $$d_{K_{r_k,s_k}}(e_k)=\frac{r_k+s_k-1}{r_ks_k}<1=d_{K_{1,1}}(e_1).$$
 Hence by Lemma \ref{lemma5}, we get that for $2\leq k\leq n$, $d_G(e_k)<d_G(e_1)$.

 Now we compare densities of $e_1$ and non-key edges. By Theorem \ref{tm1}, it is easily seen that for each $2\leq k\leq n$, if $x_1y_1\in E(K_{r_{k},s_{k}})$ is a type 1 edge of $G$ and $x_2y_2\in E(K_{r_{k},s_{k}})$ is a type 2 edge of $G$, then
 $$\tau_G(x_1y_1)<\tau_G(x_2y_2).$$
 So, in order to show $e_1$ has maximum density, we only need to show that the density of $e_1$ is larger than that of any type 2 edge.

 By Lemma \ref{lemma3}, it is easily verified that
 \begin{equation}\label{e29}
 \tau_{G}(e_{1})=\prod\limits_{i=1}\limits^{n}\tau(G_{i})\left[\sum\limits_{i\neq k}d_{G_{i}}(u_{i}v_{i})\right]=\prod \limits_{i=1}\limits^{n}r_{i}^{s_{i}-1}s_{i}^{r_{i}-1}\left[\sum\limits_{i=2}^n\frac{r_{i}+s_{i}-1}{r_{i}s_{i}}\right].
\end{equation}
Let $xy\in E(K_{r_{k},s_{k}})$ be a type-2 edge. Then by Theorem \ref{tm1}, we have
\begin{align*}
&\tau_{G}(e_{1})-\tau_{G}(xy)\\
&=\prod \limits_{i=1}\limits^{n}r_{i}^{s_{i}-1}s_{i}^{r_{i}-1}\left[\sum\limits_{i=2}\frac{r_{i}+s_{i}-1}{r_{i}s_{i}}\right]-\prod \limits_{i=1}\limits^{n}r_{i}^{s_{i}-1}s_{i}^{r_{i}-1}\left[\frac{r_{k}+s_{k}-1}{r_{k}s_{k}}\sum \limits_{i=1}\limits^{n}\frac{r_{i}+s_{i}-1}{r_{i}s_{i}}-\frac{1}{r_{k}^{2}s_{k}^{2}}\right]\\
&=\prod \limits_{i=1}\limits^{n}r_{i}^{s_{i}-1}s_{i}^{r_{i}-1}\left[\sum\limits_{i=2}^{n}\frac{r_{i}+s_{i}-1}{r_{i}s_{i}}-\frac{r_{k}+s_{k}-1}{r_{k}s_{k}}\sum \limits_{i=1}\limits^{n}\frac{r_{i}+s_{i}-1}{r_{i}s_{i}}+\frac{1}{r_{k}^{2}s_{k}^{2}}\right]\\
&=\prod \limits_{i=1}\limits^{n}r_{i}^{s_{i}-1}s_{i}^{r_{i}-1}\left[\sum\limits_{i=2}^{n}\frac{r_{i}+s_{i}-1}{r_{i}s_{i}}-\frac{r_{k}+s_{k}-1}{r_{k}s_{k}}\sum \limits_{i=2}\limits^{n}\frac{r_{i}+s_{i}-1}{r_{i}s_{i}}-\frac{r_{k}+s_{k}-1}{r_{k}s_{k}}+\frac{1}{r_{k}^{2}s_{k}^{2}}\right]\\
&=\prod \limits_{i=1}\limits^{n}r_{i}^{s_{i}-1}s_{i}^{r_{i}-1}\left[\sum\limits_{i=2}^{n}\frac{4t_i-2}{2t_i(2t_i-1)}-\frac{4t_k-2}{2t_k(2t_k-1)}\sum \limits_{i=2}\limits^{n}\frac{4t_i-2}{2t_i(2t_i-1)}-\frac{4t_k-2}{2t_k(2t_k-1)}+\frac{1}{r_{k}^{2}s_{k}^{2}}\right]\\
&=\prod\limits_{i=1}\limits^{n}r_{i}^{s_{i}-1}s_{i}^{r_{i}-1}\left[\frac{p}{q-p}(1-\frac{1}{t_{k}})-\frac{1}{t_{k}}+\frac{1}{r_{k}^{2}s_{k}^{2}}\right]\\
&\geq\prod\limits_{i=1}\limits^{n}r_{i}^{s_{i}-1}s_{i}^{r_{i}-1}\left[\frac{p}{q-p}(1-\frac{p}{q})-\frac{p}{q}+\frac{1}{r_{k}^{2}s_{k}^{2}}\right]\\
&=\prod\limits_{i=1}\limits^{n}r_{i}^{s_{i}-1}s_{i}^{r_{i}-1}\cdot\frac{1}{r_{k}^{2}s_{k}^{2}}>0.
\end{align*}

Consequently, $\mbox{dep}(G)=\max_{e\in G}d_{G}(e)=d_{G}(e_{1})=p/q$.
\end{proof}

Since in Theorem \ref{tm2}, we can always insure that $n$ is even, it implies that the necklace graph can be always chosen to be bipartite. Thus the following result is obvious.

\begin{cor}\label{cor}
Let $p$, $q$ be positive integers, $p<q$. Then there exists a bipartite graph $G$, such that $\mbox{dep}(G)=p/q$.
\end{cor}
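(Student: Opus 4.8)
The plan is to deduce this from Theorem~\ref{tm2}: the only thing not already guaranteed there is that the realizing necklace graph can be taken bipartite. So I would reduce the corollary to the single assertion that the parameters in Theorem~\ref{tm2} can always be chosen so that $n$ is even, and then check that $n$ even forces $G=N(K_{1,1},K_{r_2,s_2},\ldots,K_{r_n,s_n})$ to be bipartite.

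First I would produce at least one admissible tuple $(t_2,\ldots,t_n)$. Taking all $t_i$ equal to $k(q-p)$ for a suitable integer $k$, the defining sum becomes $\frac{pk}{k(q-p)}=\frac{p}{q-p}$, and choosing $k\ge \frac{q}{p(q-p)}$ guarantees $t_i=k(q-p)\ge q/p$, so the hypotheses of Theorem~\ref{tm2} hold. This uniform choice uses $pk$ unit fractions, hence $n=pk+1$.

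The key step, and the main obstacle, is to control the parity of $n$, since for even $p$ the uniform construction always yields $n=pk+1$ odd regardless of $k$. I would fix this with the splitting identity $\frac{1}{t}=\frac{1}{t+1}+\frac{1}{t(t+1)}$: applying it to one term replaces that term by two, leaving the sum equal to $\frac{p}{q-p}$ while increasing the number of unit fractions by one, and hence flipping the parity of $n$. Because the term being split satisfies $t=k(q-p)\ge q/p$, the two new denominators $t+1$ and $t(t+1)$ both exceed $q/p$, so the lower-bound constraint $t_i\ge q/p$ of Theorem~\ref{tm2} is preserved. Thus from the uniform tuple I can always reach an admissible tuple with $n$ even, for which Theorem~\ref{tm2} gives $\mbox{dep}(G)=d_G(e_1)=p/q$.

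Finally I would verify bipartiteness when $n$ is even by exhibiting a proper $2$-colouring. In each $K_{r_i,s_i}$ colour the two parts with opposite colours; since $u_i$ and $v_i$ lie in different parts, moving along the key path $u_1,\,v_1=u_2,\,v_2=u_3,\ldots,v_n=u_1$ reverses the colour at each of the $n$ blocks. As $n$ is even, the colour assigned at $v_n$ agrees with the colour of $u_1$, so the colouring closes up consistently around the necklace; every edge lies inside some block and therefore joins vertices of opposite colours. Hence $G$ is bipartite, and the corollary follows. The only genuinely delicate point is the parity adjustment, and the splitting identity resolves it cleanly precisely because it respects both the prescribed sum and the bound $t_i\ge q/p$.
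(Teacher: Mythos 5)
Your proposal is correct and takes essentially the same route as the paper: the paper's own proof is a one-line remark that in Theorem~\ref{tm2} one can always ensure $n$ is even, whence the necklace graph is bipartite. You simply supply the details the paper leaves implicit -- an explicit admissible tuple $t_i=k(q-p)$, the parity adjustment via the splitting identity $\frac{1}{t}=\frac{1}{t+1}+\frac{1}{t(t+1)}$ (which indeed preserves both the sum $\frac{p}{q-p}$ and the bound $t_i\geq q/p$), and the $2$-colouring argument for even $n$ -- and all of these checks are sound.
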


\section{Constructing spanning tree edge dependences via planar graphs}
\subsection{The case of planar multigraphs}
If the graph have multiple edges, then the graph is called a \textit{multigraph}. In this section, we show that all rational spanning trees edge dependences are constructible via planar multigraphs.

In \cite{nka}, Kahl defined generalized theta graph. A \textit{generalized theta graph} $\Theta(r_1 , r_2,\ldots, r_n )$ is a graph consisting of two distinguished vertices $u, v $ with $n$ disjoint paths between them, of lengths (in edges) of $r_1, r_2,\ldots,r_n$. For example, the generalized theta graph $\Theta(1, 2, 3, 7)$ is shown in figure 2 (left). Using the construction of generalized theta graph, he proved that all rational spanning tree edge densities are constructible via planar graphs.
\begin{figure*}[!ht]

\begin{minipage}[t]{0.55\linewidth}
\centering
\includegraphics[width=5cm,height=5cm]{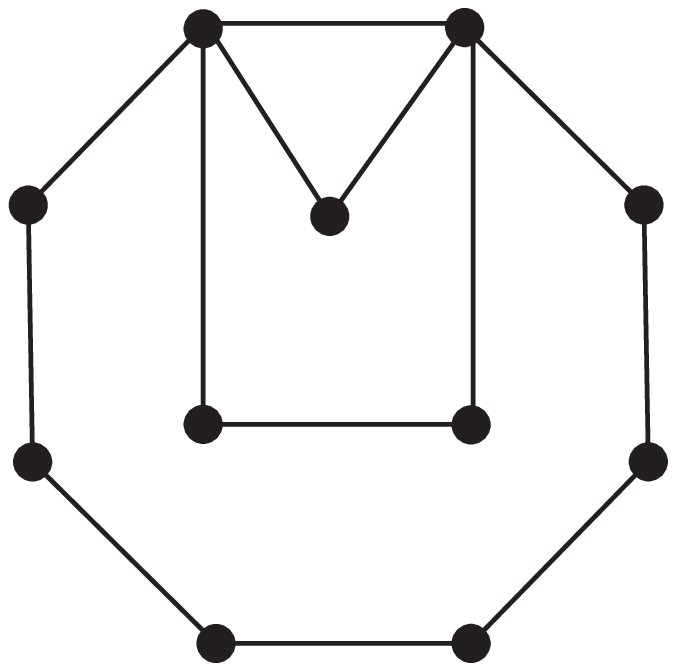}
\end{minipage}
\begin{minipage}[t]{0.5\linewidth}        
\hspace{1mm}
\includegraphics[width=7cm,height=6cm]{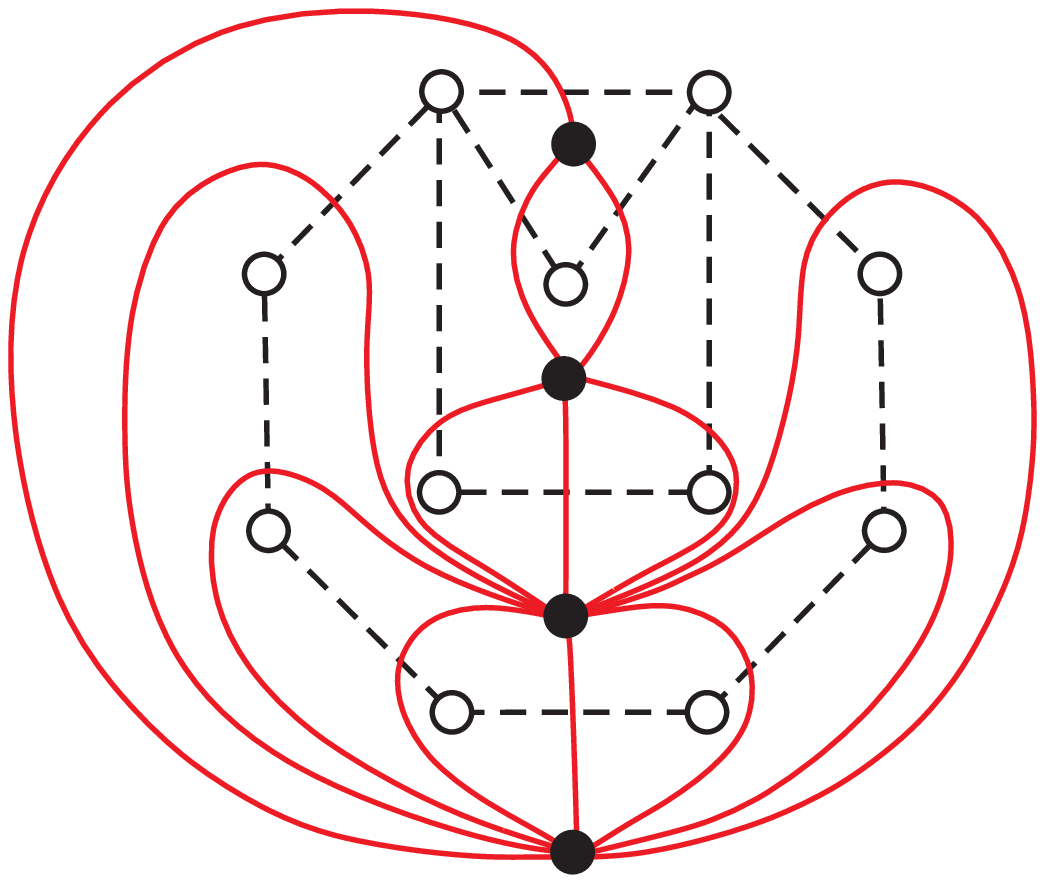}
\end{minipage}
\caption{The generalized theta graph $\Theta(1, 2, 3, 7)$ (left) and its dual graph (right)}
\end{figure*}
\begin{thm}\label{planar}
Let $p$, $q$ be positive integers, $p<q$. Let $G = \Theta(1,r_2,\ldots,r_n)$, with
$$\sum_{i=2}^n\frac{1}{r_i}=\frac{q-p}{p}.$$
Then $d_G(uv)=p/q.$
\end{thm}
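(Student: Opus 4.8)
The plan is to exploit the electric network interpretation recalled in the introduction, namely that $d_G(uv)=\Omega(u,v)$, the resistance distance between $u$ and $v$ in the network obtained by replacing every edge of $G$ by a unit resistor. For the generalized theta graph this resistance is trivial to compute by series–parallel reduction, so the whole statement reduces to a one-line arithmetic simplification using the hypothesis on the $r_i$.

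Concretely, in $G=\Theta(1,r_2,\ldots,r_n)$ the two vertices $u$ and $v$ are joined by $n$ internally disjoint paths, the $i$-th having $r_i$ edges. First I would observe that the $i$-th path is a chain of $r_i$ unit resistors in series and hence contributes effective resistance $r_i$ between $u$ and $v$. Since these paths share no vertices other than the common endpoints $u$ and $v$, they are genuinely in parallel, so the conductances add:
$$\frac{1}{\Omega(u,v)}=\sum_{i=1}^n\frac{1}{r_i}.$$
Using $r_1=1$ together with the standing assumption $\sum_{i=2}^n 1/r_i=(q-p)/p$, I would then compute
$$\frac{1}{\Omega(u,v)}=1+\sum_{i=2}^n\frac{1}{r_i}=1+\frac{q-p}{p}=\frac{q}{p},$$
whence $\Omega(u,v)=p/q$, and therefore $d_G(uv)=\Omega(u,v)=p/q$, as claimed.

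As an alternative (purely combinatorial) route, I would characterize the spanning trees of $\Theta(1,r_2,\ldots,r_n)$ directly. Counting edges and vertices shows every spanning tree is obtained by deleting exactly $n-1$ edges; since any two intact $u$–$v$ paths would create a cycle, precisely one path $P_j$ is retained in full while each of the remaining $n-1$ paths loses exactly one of its edges. This gives $\tau(G)=\sum_{j=1}^n\prod_{i\neq j}r_i=\bigl(\prod_{i=1}^n r_i\bigr)\sum_{j=1}^n 1/r_j$. The edge $uv$ is the unique edge of the length-one path, so a spanning tree contains it iff that path is the retained one, which contributes only the $j=1$ term: $\tau_G(uv)=\prod_{i\neq 1}r_i$. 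Dividing yields $d_G(uv)=\bigl(1/r_1\bigr)/\sum_{j=1}^n(1/r_j)$, and with $r_1=1$ this again collapses to $p/q$.

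Neither approach presents a serious obstacle; the only point requiring care is the justification of the parallel combination in the first method — one must note that the absence of shared internal vertices is exactly what makes the paths independent two-terminal subnetworks between $u$ and $v$ — and, in the second method, the clean spanning-tree characterization (exactly one path kept whole, one edge deleted from each of the others). I would favor the resistance-distance argument for the write-up, since it is shorter and ties directly into the electric-network machinery used elsewhere in the paper, while the combinatorial count serves as a self-contained cross-check.
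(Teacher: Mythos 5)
Your proof is correct, but your preferred route is genuinely different from the paper's. The paper does not actually reprove Theorem \ref{planar}: it is imported from Kahl \cite{nka}, and the machinery behind it is the counting lemma quoted immediately after the theorem, which gives $\tau(G)=\prod_{i=1}^{n}r_i\left(\sum_{i=1}^{n}1/r_i\right)$ and $\tau_G(e_k)=\prod_{i=1}^{n}r_i\left(\sum_{i=1}^{n}1/r_i-\frac{1}{r_k}\sum_{i\neq k}1/r_i\right)$; specializing to the edge $uv$ (so $k=1$, $r_1=1$) yields $d_G(uv)=\frac{1/r_1}{\sum_{i=1}^{n}1/r_i}=\frac{1}{1+(q-p)/p}=p/q$. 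Your secondary, combinatorial argument is essentially a self-contained rederivation of that lemma in the only case needed, and your characterization of spanning trees (one path kept whole, exactly one edge deleted from each other path) is the right one --- the point requiring the care you flag is that deleting two edges from a single path disconnects the segment between them, since internal vertices have degree $2$. Your primary, electrical argument is different and shorter: it rests on the identity $d_G(uv)=\Omega(u,v)$, which the paper states in the introduction (Klein--Randi\'c, Ferrara--Gould--Suffel), plus series--parallel reduction, giving $\Omega(u,v)=\left(1+\frac{q-p}{p}\right)^{-1}=p/q$. What the lemma-based route buys, and a single resistance computation does not, is the density of \emph{every} edge of the theta graph; the paper needs exactly that right afterwards, to conclude that $uv$ is the unique edge of minimum density, which is what drives the dual-graph construction for planar multigraphs. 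For the statement as posed, either of your two arguments is complete.
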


In addition, the number of spanning trees of $G=\Theta(r_1 , r_2,\ldots, r_n )$ and the number of spanning trees containing an specific edge $e$ are also determined.

\begin{lem}\cite{nka}
Let $G=\Theta(r_1 , r_2,\ldots, r_n )$ be a generalized theta graph. Then
$$\tau(G)=\prod \limits_{i=1}^{n}r_i\left(\sum_{i=1}^{n}\frac{1}{r_i}\right),$$
and if $e_k$ is an edge of the $k$th path of $G$, then
$$\tau_G(e_k)=\prod \limits_{i=1}^{n}r_i\left(\sum_{i=1}^{n}\frac{1}{r_i}-\frac{1}{r_k}\sum_{i\neq k}\frac{1}{r_i}\right).$$
\end{lem}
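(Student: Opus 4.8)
The plan is to give a direct bijective description of the spanning trees of a generalized theta graph and then count. First I would record the basic parameters. Writing $G=\Theta(r_1,\dots,r_n)$ as $n$ internally disjoint $u$–$v$ paths, with the $i$-th path having vertices $u=w^{(i)}_0,w^{(i)}_1,\dots,w^{(i)}_{r_i}=v$ and edges $f^{(i)}_j=w^{(i)}_{j-1}w^{(i)}_j$, one computes $|E(G)|=\sum_{i=1}^n r_i$ and $|V(G)|=2+\sum_{i=1}^n(r_i-1)$, so the cyclomatic number is $|E|-|V|+1=n-1$. Hence every spanning tree is obtained by deleting exactly $n-1$ edges.

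The structural step, which is the heart of the argument, is to characterize which deletions of $n-1$ edges leave a spanning tree. Each internal vertex $w^{(i)}_j$ with $1\le j\le r_i-1$ has degree $2$ in $G$, so deleting two edges from a single path isolates the internal segment between them; thus no spanning tree deletes two edges from one path. Since there are $n$ paths and $n-1$ deleted edges, a spanning tree deletes exactly one edge from $n-1$ of the paths and keeps one path, say the $j$-th, intact. Conversely, retaining path $j$ whole and deleting one edge from every other path leaves a connected subgraph (every vertex reaches $u$ or $v$ along its own path, and $u,v$ are joined by path $j$) with exactly $|V|-1$ edges, hence a spanning tree. This yields the bijection: a spanning tree is specified by the choice of a retained path $j$ together with, for each $i\neq j$, a choice of the deleted edge among the $r_i$ edges of path $i$.

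Counting then follows mechanically. For $\tau(G)$ I would sum over the retained path $j$: there are $\prod_{i\neq j} r_i$ trees retaining path $j$, so $\tau(G)=\sum_{j=1}^n\prod_{i\neq j}r_i=\bigl(\prod_{i=1}^n r_i\bigr)\sum_{i=1}^n\frac1{r_i}$, which is the first formula. For $\tau_G(e_k)$ I would condition on whether path $k$ is retained. If $j=k$, then $e_k$ is automatically present, contributing $\prod_{i\neq k}r_i$ trees. If $j\neq k$, one edge of path $k$ is deleted, and $e_k$ survives precisely when the deleted edge is one of the other $r_k-1$ edges; summing over $j\neq k$ gives $(r_k-1)\sum_{j\neq k}\prod_{i\neq j,\,k}r_i=(r_k-1)\bigl(\prod_{i\neq k}r_i\bigr)\sum_{j\neq k}\frac1{r_j}$. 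Writing $P=\prod_{i=1}^n r_i$ and adding the two contributions yields $\tau_G(e_k)=\frac{P}{r_k}+\frac{(r_k-1)P}{r_k}\sum_{j\neq k}\frac1{r_j}$, which a one-line rearrangement using $\sum_{i}\frac1{r_i}=\frac1{r_k}+\sum_{i\neq k}\frac1{r_i}$ identifies with $P\bigl(\sum_{i=1}^n\frac1{r_i}-\frac1{r_k}\sum_{i\neq k}\frac1{r_i}\bigr)$, the claimed expression.

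The main obstacle is the structural step rather than the arithmetic: one must argue carefully that the degree-$2$ internal vertices force at most one deletion per path, and that the complements described are genuinely spanning trees, so that the enumeration is an exact bijection. I would also verify that length-one paths (where a path is a single $u$–$v$ edge, so $G$ may be a multigraph) cause no trouble: retaining such a path keeps the edge, and when $r_k=1$ the factor $r_k-1=0$ correctly suppresses the second contribution, so both formulas remain valid.
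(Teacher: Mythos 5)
Your proof is correct. The paper itself offers no proof of this lemma---it is quoted directly from Kahl \cite{nka}---but your argument is exactly the standard one used there: the cyclomatic number is $n-1$, the degree-$2$ internal vertices force at most one deletion per path, so each spanning tree keeps precisely one path intact and drops one edge from each of the remaining $n-1$ paths, and the two formulas follow by summing over the retained path (conditioning on whether it is path $k$ for the second formula); your algebra and your check of the $r_k=1$ case are both sound.
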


From the above lemma, it is easily verified that in $G=\Theta(1, r_2,\ldots, r_n )$, $uv$ is the unique edge with minimum density, which seems to be depressing since the dependence of $G$ has nothing to do with the density of $uv$. Fortunately, we could construct a new graph $G^*$, called the dual graph of $G$, such that $u^*v^*$, the dual edge of $uv$ in $G^*$, has maximum density in $G^*$.

Now we introduce the dual graph of a planar graph. Let $G$ be a planar graph and we draw $G$ in the plane in such a way that no two edges intersect-except at a vertex to which they are both incident. In such a drawing, $G$ is a plane graph. The \textit{dual graph} of the plane graph $G$, denoted by $G^*$, is a plane graph whose vertices correspond to the faces of $G$ and the edges of $G^*$ correspond to edges of $G$ as follows: if $e$ is an edge of $G$ with face $f_1$ on one side and face $f_2$ on the other side, then the end vertices of the dual edge $e^*$ are the vertices that represent the faces $f_1$, $f_2$ of $G$. We can embed $G$ and $G^*$ simultaneously in the plane, such that an edge $e$ of $G$ crosses the corresponding dual edge $e^*$ of $G^*$ exact once and crosses no other edges of $G^*$. For example, the dual graph of the generalized theta graph $\Theta(1, 2, 3, 7)$ is shown in figure 2 (right).

Let $e=uv$ be an edge of $G$ and let $e^*=u^*v^*$ be its dual edge in $G^*$. Thomassen established a nice relation between $\Omega_G(u,v)$ and $\Omega_{G^*}(u^*,v^*)$.
\begin{prop}\label{p1}\cite{tho}
Let $G$ be a planar graph and let $G^*$ be the geometric dual of $G$. Let $e=uv\in E(G)$ and $e^*=(u^*,v^*)\in E(G^*)$ be a pair of dual edges. Then
\begin{equation}
\Omega_G(u,v)+\Omega_{G^*}(u^*,v^*)=1.
\end{equation}
\end{prop}

From Proposition \ref{p1}, we could draw the conclusion that all rational spanning tree edge dependences are constructible via planar multigraphs.

\begin{thm}
Let $p<q$ be positive integers, $p<q$. Let $G = \Theta(1,r_2,\ldots,r_n)$, with
$$\sum_{i=2}^n\frac{1}{r_i}=\frac{p}{q-p}.$$
Let $G^*$ be the dual graph of $G$, and let $e^*=u^*v^*$ be the dual edge of $e=uv$. Then $\mbox{dep}(G^*)=d_{G^*}(u^*v^*)=p/q.$
\end{thm}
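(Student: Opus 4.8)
The plan is to exploit planar duality to convert a statement about the \emph{maximum} density in $G^*$ into a statement about the \emph{minimum} density in $G$, where the relevant quantities are already computable. The starting point is the identity, valid for every edge $e=xy\in E(G)$ with dual $e^*=x^*y^*\in E(G^*)$,
\begin{equation*}
d_{G^*}(e^*)=1-d_G(e).
\end{equation*}
This follows by reading each density as a resistance distance: since $d_G(e)=\Omega_G(x,y)$ and $d_{G^*}(e^*)=\Omega_{G^*}(x^*,y^*)$ in the unit-resistor networks, Proposition~\ref{p1} gives $d_G(e)+d_{G^*}(e^*)=1$. Because edge duality is a bijection between $E(G)$ and $E(G^*)$, taking the maximum over all dual edges yields
\begin{equation*}
\mbox{dep}(G^*)=\max_{e\in E(G)}\bigl(1-d_G(e)\bigr)=1-\min_{e\in E(G)}d_G(e),
\end{equation*}
so the maximizing dual edge is precisely the dual of the minimizing primal edge.

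Next I would pin down the minimum-density edge of $G=\Theta(1,r_2,\dots,r_n)$. Writing $S=\sum_{i=1}^n 1/r_i$, the spanning-tree count for generalized theta graphs gives
\begin{equation*}
d_G(e_k)=\frac{\tau_G(e_k)}{\tau(G)}=1-\frac{1}{r_k}+\frac{1}{r_k^2 S},
\end{equation*}
so that $d_G(e_k)-d_G(uv)=\bigl(1-\tfrac1{r_k}\bigr)\cdot\frac{S-1-1/r_k}{S}\ge 0$, with strict inequality whenever $r_k\ge 2$. Hence $uv$ (the length-$1$ path, for which $r_1=1$) is the unique edge of minimum density, as already noted in the text; this is the structural heart of the argument. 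With $\sum_{i=2}^n 1/r_i=\frac{p}{q-p}$ we get $S=1+\frac{p}{q-p}=\frac{q}{q-p}$, and therefore
\begin{equation*}
d_G(uv)=\frac1S=\frac{q-p}{q}.
\end{equation*}

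Combining the two steps, the maximum density of $G^*$ is attained at the dual edge $u^*v^*$ of $uv$, and
\begin{equation*}
\mbox{dep}(G^*)=d_{G^*}(u^*v^*)=1-d_G(uv)=1-\frac{q-p}{q}=\frac{p}{q},
\end{equation*}
which is the claim. I expect the only genuine obstacle to be the identification of $uv$ as the minimizer: the duality identity and the final arithmetic are routine, but one must ensure no other edge of $G$ can tie or beat $uv$. This forces a mild constraint on the construction — one should realize $\frac{p}{q-p}$ as a sum $\sum_{i=2}^n 1/r_i$ with every $r_i\ge 2$ (always possible, e.g.\ by repeating halves) so that no second length-$1$ path appears and the minimizer stays unique. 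An alternative, self-contained verification bypasses this observation entirely: the dual of a theta graph is a cycle on $n$ vertices in which consecutive vertices are joined by $r_i$ parallel edges, so each dual edge in bundle $i$ has density $\frac{\rho_i(S-\rho_i)}{S}$ with $\rho_i=1/r_i$; since $\frac{\rho_1(S-\rho_1)}{S}-\frac{\rho_i(S-\rho_i)}{S}=\frac{(\rho_1-\rho_i)(S-\rho_1-\rho_i)}{S}\ge 0$ and $\rho_1=1$ is the largest bundle resistance, bundle $1$ (which contains $u^*v^*$) is maximal, recovering the same conclusion.
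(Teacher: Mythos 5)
Your proof is correct and follows essentially the same route as the paper: both use Thomassen's duality (Proposition~\ref{p1}) to turn the maximum density in $G^*$ into the minimum density in $G$, identify $uv$ as the minimizing edge of the theta graph, and evaluate $d_G(uv)=(q-p)/q$, the only difference being that you verify explicitly (via $d_G(e_k)=1-\tfrac{1}{r_k}+\tfrac{1}{r_k^2S}$) what the paper dismisses as ``easily verified'' and compute $d_G(uv)$ directly rather than citing Theorem~\ref{planar} with the roles of $p$ and $q-p$ swapped. One small caveat: your claim of strict inequality ``whenever $r_k\ge 2$'' fails for $n=2$ (there $S-1-1/r_k=0$, as $\Theta(1,r_2)$ is a cycle), and your proposed constraint $r_i\ge 2$ is unnecessary, since ties are harmless --- the weak inequality $d_G(e)\ge d_G(uv)$ already gives $\mbox{dep}(G^*)=d_{G^*}(u^*v^*)$.
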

\begin{proof}
Since the edge $uv$ is the unique edge with minimum density in $G$, it following from Proposition \ref{p1} that the edge $u^*v^*$ is the unique edge with maximum density in $G^*$. By Theorem \ref{planar}, we get that if $\sum\limits_{i=2}^n\frac{1}{r_i}=\frac{p}{q-p},$ then $d_G(uv)=1-p/q$. Hence, again by Proposition \ref{p1} , we get that $\mbox{dep}(G^*)=d_{G^*}(u^*v^*)=p/q,$ as desired.
\end{proof}

As a straightforward consequence, we get

\begin{cor}\label{cor}
Let $p$, $q$ be positive integers, $p<q$. Then there exists a planar multigraph $G$, such that $\mbox{dep}(G)=p/q$.
\end{cor}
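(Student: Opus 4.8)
The plan is to obtain this as an immediate packaging of the preceding theorem; the only thing that has to be supplied is a concrete realization of the path lengths. Given positive integers $p<q$, the theorem manufactures a planar multigraph of dependence exactly $p/q$ the moment we can produce positive integers $r_2,\ldots,r_n$ with
\[
\sum_{i=2}^{n}\frac{1}{r_i}=\frac{p}{q-p}.
\]
Because $p<q$ forces $q-p\geq 1$, the right-hand side is a legitimate positive rational, so what remains is purely the existence of a unit-fraction (Egyptian-fraction) representation of it.

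The cleanest way I would furnish such a representation is by repetition, which is harmless here precisely because we are working with multigraphs. Take $n=p+1$ and put $r_2=r_3=\cdots=r_{p+1}=q-p$; then
\[
\sum_{i=2}^{p+1}\frac{1}{q-p}=\frac{p}{q-p},
\]
so the hypothesis of the theorem is satisfied by $G=\Theta\bigl(1,\underbrace{q-p,\ldots,q-p}_{p}\bigr)$. Since $p\geq 1$ we have $n=p+1\geq 2$, so $G$ is a bona fide generalized theta graph with at least two internally disjoint $u$--$v$ paths.

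It then remains only to pass to the dual and cite the theorem. A generalized theta graph is plane, and its geometric dual $G^{*}$ is again a connected plane multigraph (indeed the $r_i$ edges of the $i$-th path all separate the same pair of faces, so they become $r_i$ parallel dual edges). Applying the preceding theorem to this $G$ gives $\mbox{dep}(G^{*})=d_{G^{*}}(u^{*}v^{*})=p/q$, which is exactly the claim, with $G^{*}$ the required planar multigraph. I anticipate no genuine obstacle here: the statement is a corollary in the strict sense, and the single point deserving a line of justification---the existence of the unit-fraction decomposition---is dispatched above by the repetition trick.
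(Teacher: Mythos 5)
Your proposal is correct and matches the paper's approach: the paper presents this corollary as an immediate consequence of the preceding theorem on $\mbox{dep}(G^*)$ for duals of generalized theta graphs, leaving the choice of the $r_i$ implicit, and your repetition trick $r_2=\cdots=r_{p+1}=q-p$ is a valid way to supply the required decomposition $\sum_{i=2}^{p+1}\frac{1}{r_i}=\frac{p}{q-p}$. Note that when $q-p=1$ your $G$ itself has parallel edges, but this is harmless here since the theorem's hypotheses place no lower bound on the $r_i$ and the target object is a planar multigraph in any case.
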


\subsection{The case of (simple) planar graphs}
In this section, we first show that for all $p/q\leq \frac{1}{3}$, spanning tree edge dependences $p/q$ are not constructible via planar graphs. We first introduce the following lemma, which is also known as the famous Foster's first formula in electric network theory \cite{foster}.

\begin{lem}\cite{fgs}\label{foster}
Let $G$ be a connected graph on $n$ vertices. Then
\begin{equation}
\sum_{e\in E(G)}d_G(e)=n-1.
\end{equation}
\end{lem}

By Lemma \ref{foster}, we could get the following result.
\begin{thm}\label{th37}
Let $G$ be a planar graph on $n$ vertices. Then
\begin{equation}
\mbox{dep}(G)>\frac{1}{3}.
\end{equation}
\end{thm}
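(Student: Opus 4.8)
The plan is to combine Foster's first formula (Lemma~\ref{foster}) with the classical edge bound for planar graphs, reducing the whole statement to one elementary inequality. First, writing $m=|E(G)|$, observe that $\mbox{dep}(G)$ is by definition the \emph{maximum} of the densities $d_G(e)$ over the $m$ edges, hence it is at least their average. By Lemma~\ref{foster} the sum of all densities is $n-1$, so the average density is exactly $(n-1)/m$, giving $\mbox{dep}(G)\geq (n-1)/m$.

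Next I would invoke the standard consequence of Euler's formula that every simple planar graph on $n\geq 3$ vertices satisfies $m\leq 3n-6$. Substituting this into the previous bound yields $\mbox{dep}(G)\geq (n-1)/(3n-6)$. The proof then closes with the purely arithmetic observation that $(n-1)/(3n-6)>1/3$ for all $n\geq 3$, which is immediate from $3(n-1)=3n-3>3n-6$.

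The delicate point---such as it is---is that the averaging step only produces a non-strict inequality, so the required strictness must come entirely from the final arithmetic step; fortunately the quantity $(n-1)/(3n-6)$ strictly exceeds $1/3$ for every admissible $n$, so no sharper argument is needed. The remaining care concerns the small cases $n=1,2$ for which the bound $m\leq 3n-6$ does not hold: a connected graph on one vertex has no edges (so the dependence is vacuous), while a connected graph on two vertices consists of a single cut edge of density $1>1/3$, so the statement holds trivially. I expect this bookkeeping of degenerate cases, rather than any genuine difficulty, to be the only obstacle.
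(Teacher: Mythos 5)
Your proposal is correct and takes essentially the same route as the paper: Foster's formula (Lemma~\ref{foster}) gives $\mbox{dep}(G)\geq (n-1)/m$ by averaging, the planar bound $m\leq 3n-6$ then yields $\mbox{dep}(G)\geq (n-1)/(3n-6)>\frac{1}{3}$. If anything, your write-up is more careful than the paper's, whose displayed chain contains an inverted inequality sign (it writes $\mbox{dep}(G)\leq \frac{n-1}{3n-6}$ where $\geq$ is intended) and which does not mention the degenerate cases $n=1,2$ that you correctly dispose of.
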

\begin{proof}
Since $G$ is a planar graph, $|E(G)|\leq 3n-6$. Hence
$$\mbox{dep}(G)\leq \frac{n-1}{3n-6}>\frac{1}{3}.$$
\end{proof}

Theorem \ref{th37} implies that Conjecture 2 fails for $p/q\leq \frac{1}{3}$. That is, all rational dependences $p/q\leq \frac{1}{3}$ are not constructible via planar graphs.

In the rest of this section, we show that for $p/q>\frac{1}{2}$, the spanning tree edge dependence $p/q$ is constructible via planar graphs. To this end, we construct a new family of planar graphs.
 Let $G=N(H_{r_{1}},H_{r_{2}},\cdots, H_{r_{n}})$ be a necklace graph, where $H_{r_{i}}$ is a graph consisting of two key vertices $u_{i}$, $v_{i}$ with one key edge $e_{i}=u_iv_i$ and $r_{i}$ disjoint paths of length 2 connecting $u_i$ and $v_i$. In particular, if $r_{i}=0$, then $H_{0}$ is the graph only consists of the edge $u_{i}v_{i}$. Clearly, the graph $G=N(H_{r_{1}},H_{r_{2}},\cdots, H_{r_{n}})$ is planar. For example, the graph $G=N(H_{0},H_{4},H_{1},H_{3},H_{2},H_{3})$ is shown in Figure 3.
\begin{figure*}[!ht]
 \centering
 $$\includegraphics[width=5.5cm,height=5cm]{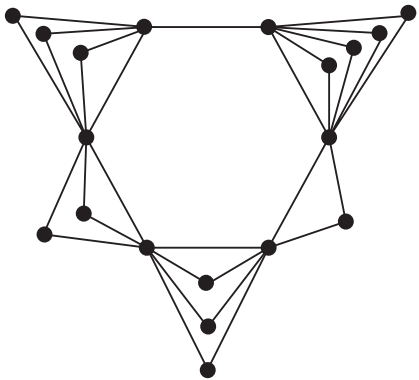}$$\\
 \caption{The planar graph $G=N(H_{0},H_{4},H_{1},H_{3},H_{2},H_{3})$.}   \label{Fig 3}
\end{figure*}

Now we show that for any rational number $0<p/q<1$, the density $p/q$ is constructible from $G=N(H_{0},H_{r_{2}},\cdots, H_{r_{n}})$.

\begin{thm}\label{d{G}(e)}
Let $p,\ q$ be positive integers, $p<q$. Let $G=N(H_{0},H_{r_{2}},\cdots, H_{r_{n}})$ such that $$\sum\limits_{i=2}\limits^{n}\frac{2}{r_{i}+2}=\frac{p}{q-p}.$$ Then $$\mbox{d}_{G}(e_{1})=\frac{p}{q}.$$
\end{thm}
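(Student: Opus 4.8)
The plan is to reduce the statement to an application of the necklace formula in Lemma \ref{lemma3}, exactly mirroring the structure of Kahl's Theorem \ref{th1} but with the building block $H_{r_i}$ in place of $K_{r_i,s_i}$. Thus the first step is to compute, for the single gadget $H=H_r$, the two local quantities that feed into Lemma \ref{lemma3}: its spanning tree count $\tau(H_r)$ and the density $d_{H_r}(e)$ of its key edge $e=uv$. The graph $H_r$ is precisely a generalized theta graph $\Theta(1,2,2,\ldots,2)$ with $r$ paths of length $2$ together with the direct edge $uv$; equivalently it is an electrical network in which the edge $uv$ (resistance $1$) is in parallel with $r$ series-pairs (each of resistance $2$). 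The parallel conductance is therefore $1+r/2=(r+2)/2$, so by the resistance-distance interpretation of density stated in the introduction, $d_{H_r}(uv)=\Omega_{H_r}(u,v)=1/(1+r/2)=2/(r+2)$. This is the key local density I will need.

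Second, I would assemble the global density via Lemma \ref{lemma3}. Since $e_1$ is the key edge of the trivial gadget $H_0$ (just the edge $u_1v_1$), we have $d_{H_0}(e_1)=1$. Applying the first formula of Lemma \ref{lemma3} gives
\begin{equation*}
\tau(G)=\prod_{i=1}^n\tau(H_{r_i})\sum_{i=1}^n d_{H_{r_i}}(u_iv_i)=\prod_{i=1}^n\tau(H_{r_i})\left(1+\sum_{i=2}^n\frac{2}{r_i+2}\right),
\end{equation*}
and, using the second formula for $xy=e_1$ (so $k=1$, and $b_{H_0}(e_1;u_1,v_1)=0$ since removing $e_1$ from $H_0$ leaves two isolated vertices, i.e.\ a valid separating thicket containing no edge, giving the numerator term $0$; equivalently $\tau_{H_0}(e_1)=1$ directly),
\begin{equation*}
\tau_G(e_1)=\prod_{i=1}^n\tau(H_{r_i})\left[0+1\cdot\sum_{i\neq 1}d_{H_{r_i}}(u_iv_i)\right]=\prod_{i=1}^n\tau(H_{r_i})\sum_{i=2}^n\frac{2}{r_i+2}.
\end{equation*}

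Third, I would take the ratio. The product $\prod_i\tau(H_{r_i})$ cancels, leaving
\begin{equation*}
d_G(e_1)=\frac{\tau_G(e_1)}{\tau(G)}=\frac{\sum_{i=2}^n\frac{2}{r_i+2}}{1+\sum_{i=2}^n\frac{2}{r_i+2}}.
\end{equation*}
Writing $S=\sum_{i=2}^n\frac{2}{r_i+2}$ and substituting the hypothesis $S=\frac{p}{q-p}$ gives $d_G(e_1)=\frac{S}{1+S}=\frac{p/(q-p)}{1+p/(q-p)}=\frac{p}{(q-p)+p}=\frac{p}{q}$, as claimed.

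The anticipated main obstacle is not the final algebra, which is short, but justifying the local density computation $d_{H_r}(uv)=2/(r+2)$ rigorously within the paper's chosen framework. I expect the cleanest route is the electrical-network identity from the introduction ($d_G(e)$ equals the resistance distance across $e$), combined with the elementary series and parallel laws; alternatively one can verify it combinatorially by counting $\tau(H_r)$ and $\tau_{H_r}(uv)$ directly (a spanning tree of $H_r$ either uses $uv$ or not, and each length-$2$ path contributes a simple factor), which avoids invoking resistance distance at all. I would present the combinatorial count to keep the argument self-contained, and double-check the boundary case $r_i=0$ (where $H_0$ degenerates to a single edge) separately to confirm it is consistent with both formulas in Lemma \ref{lemma3}.
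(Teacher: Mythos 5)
Your proposal is correct and follows essentially the same route as the paper: compute the local key-edge density $d_{H_{r_i}}(u_iv_i)=2/(r_i+2)$, feed it into Lemma \ref{lemma3} (with the $b$-term vanishing for $e_1$), and take the ratio $S/(1+S)$ with $S=p/(q-p)$. The paper derives the local density by deletion--contraction, $\tau(H_{r_i})=r_i2^{r_i-1}+2^{r_i}=2^{r_i-1}(r_i+2)$ and $\tau_{H_{r_i}}(u_iv_i)=2^{r_i}$, which is exactly the self-contained combinatorial count you said you would present in place of the series--parallel resistance argument.
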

\begin{proof}
We first compute $\tau(G)$. For each $H_{r_{i}}$, $1\leq i\leq n$, it is not difficult to verify that
$$\tau(H_{r_{i}})=\tau(H_{r_{i}}-e_i)+\tau(H_{r_{i}}/e_i)=r_i2^{r_i-1}+2^{r_i}=2^{r_i-1}(r_i+2),$$
where $\tau(H_{r_{i}}/e_i)$ represents the graph obtained from $H_{r_i}$ by contracting $e_i$.
Hence
$$d_{H_{r_{i}}}(u_{i}v_{i})=\frac{\tau_{H_{r_{i}}}(u_{i}v_{i})}{\tau(H_{r_{i}})}=\frac{\tau(H_{r_{i}}/e_i)}{\tau(H_{r_{i}})} =\frac{2^{r_{i}}}{2^{r_{i}-1}(2+r_{i})}=\frac{2}{2+r_{i}}.$$
Then by Lemma~\ref{lemma3}, we get
\begin{align*}
\tau(G)&=\prod\limits_{i=1}\limits^{n}\tau(G_{i})\sum\limits_{i=1}\limits^{n}d_{G_{i}}(u_{i}v_{i})=\prod\limits_{i=1}\limits^{n}2^{r_{i}-1}(2+r_{i})\sum\limits_{i=1}\limits^{n}\frac{2}{2+r_{i}},\\
\tau_{G}(e_{1})&=\prod\limits_{i=1}\limits^{n}\tau(G_{i})\sum\limits_{i=2}\limits^{n}d_{G_{i}}(u_{i}v_{i})=\prod\limits_{i=1}\limits^{n}2^{r_{i}-1}(2+r_{i})\sum\limits_{i=2}\limits^{n}\frac{2}{2+r_{i}}.
\end{align*}
Thus
 \begin{equation*}
d_{G}(e_{1})=\frac{\tau_{G}(e_{1})}{\tau(G)}=\frac{\sum\limits_{i=2}\limits^{n}\frac{2}{2+r_{i}}}{\sum\limits_{i=1}\limits^{n}\frac{2}{2+r_{i}}}
=\frac{\sum\limits_{i=2}\limits^{n}\frac{2}{2+r_{i}}}{1+\sum\limits_{i=2}\limits^{n}\frac{2}{2+r_{i}}}=\frac{\frac{p}{q-p}}{1+\frac{p}{q-p}}=\frac{p}{q}.
 \end{equation*}
\end{proof}

 Then we will give the spanning tree edge number of the non-key edges in the graph $G=N(H_{0},H_{r_{2}},\cdots, H_{r_{n}})$.
\begin{thm}\label{t{G}(xy)}
Let $G=N(H_{0},H_{r_{2}},\cdots, H_{r_{n}})$ and $r_{i}\geq1$, for $i\geq2$. If $x_{k}y_{k}\in E(H_{r_{k}})$ is non-key edge of $G$, then
\begin{equation}
\tau_{G}(x_{k}y_{k})=\prod\limits_{i=1}\limits^{n}2^{r_{i}-1}(2+r_{i})\left[\frac{r_{k}+3}{2(2+r_{k})}\sum\limits_{i=1}\limits^{n}\frac{2}{2+r_{i}}-\frac{1}{(2+r_{k})^{2}}\right].
\end{equation}
\end{thm}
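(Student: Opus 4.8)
The plan is to invoke Lemma~\ref{lemma3} exactly as in the proof of Theorem~\ref{tm1}, specialized to the blocks $G_i=H_{r_i}$. From the proof of Theorem~\ref{d{G}(e)} I already have the two ``global'' ingredients $\tau(H_{r_i})=2^{r_i-1}(2+r_i)$ and $d_{H_{r_i}}(u_iv_i)=\frac{2}{2+r_i}$, so only two ``local'' quantities inside the single block $H_{r_k}$ remain to be found: the density $d_{H_{r_k}}(x_ky_k)$ of the non-key edge, and the thicket count $b_{H_{r_k}}(x_ky_k;u_k,v_k)$, that is, the number of spanning thickets of $H_{r_k}$ that separate $u_k,v_k$ and contain $x_ky_k$. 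Once these are in hand the result follows by substitution and algebraic simplification.

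Before computing, I would first record a symmetry that collapses the case analysis. Every non-key edge of $H_{r_k}$ lies on one of the length-$2$ paths $u_kwv_k$, so it is either $u_kw$ or $wv_k$ for some middle vertex $w$; the automorphism of $H_{r_k}$ interchanging $u_k$ and $v_k$ (and permuting the paths) carries $u_kw$ to $wv_k$ and preserves both the density and the separation condition. Hence, unlike the bipartite situation of Theorem~\ref{tm1} with its two edge types, there is a single type of non-key edge here, and I may take $x_ky_k=u_kw$ without loss of generality.

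Next I would compute the two local quantities by direct counting on $H_{r_k}$, classifying spanning subgraphs according to whether the edges $e_k$ and $wv_k$ are used and according to the degrees of the middle vertices. For the density, using $\tau_{H_{r_k}}(u_kw)=\tau(H_{r_k}/u_kw)$ (or splitting the spanning trees by whether $w$ has degree $1$ or $2$ and whether $e_k$ is present), I expect $\tau_{H_{r_k}}(u_kw)=2^{r_k-2}(r_k+3)$, whence $d_{H_{r_k}}(u_kw)=\frac{r_k+3}{2(2+r_k)}$, which is exactly the coefficient in the statement. For the thicket count, the presence of $u_kw$ forces $w$ into the component of $u_k$ and forbids both $wv_k$ and $e_k$; each of the remaining $r_k-1$ middle vertices must then attach to exactly one of $u_k,v_k$, so $b_{H_{r_k}}(u_kw;u_k,v_k)=2^{r_k-1}$, giving the clean ratio $\frac{b_{H_{r_k}}(u_kw;u_k,v_k)}{\tau(H_{r_k})}=\frac{1}{2+r_k}$.

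Finally I would substitute these values into Lemma~\ref{lemma3}, write $\sum_{i\neq k}\frac{2}{2+r_i}=\sum_{i=1}^{n}\frac{2}{2+r_i}-\frac{2}{2+r_k}$, and collect the constant terms; the two non-summation pieces combine as $\frac{1}{2+r_k}-\frac{r_k+3}{(2+r_k)^2}=-\frac{1}{(2+r_k)^2}$, which yields the claimed formula. The only genuinely combinatorial step, and thus the main obstacle, is getting these two counts on $H_{r_k}$ right; everything afterward is bookkeeping. As a sanity check I would verify the small cases $r_k=1$ (where $H_1$ is a triangle, so $d=2/3$ and the thicket count is $1$) and $r_k=2$ (where $H_2=K_4$ minus an edge, so $\tau=8$, $\tau_{H_2}(u_kw)=5$, and the thicket count is $2$) against the formulas before trusting the general computation.
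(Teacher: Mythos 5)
Your proposal is correct and follows essentially the same route as the paper: both invoke Lemma~\ref{lemma3}, compute the block-local quantities $d_{H_{r_k}}(x_ky_k)=\frac{r_k+3}{2(2+r_k)}$ and $b_{H_{r_k}}(x_ky_k;u_k,v_k)=2^{r_k-1}$, and then simplify via $\frac{1}{2+r_k}-\frac{r_k+3}{(2+r_k)^2}=-\frac{1}{(2+r_k)^2}$. Your direct count of the separating thickets (each remaining middle vertex attaches to exactly one of $u_k,v_k$) is a clean equivalent of the paper's identification of $b$ with spanning trees containing both $e_k$ and $x_ky_k$, and your values check out.
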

\begin{proof}
To compute $\tau_{H_{r_{i}}}(x_{i}y_{i})$, according to the structure of $H_{r_{i}}$, we can know
$$\tau_{H_{r_{i}}}(x_{i}y_{i})=\tau_{H_{r_{i}}-e_{i}}(x_{i}y_{i})+\tau_{H_{r_{i}}\setminus e_{i}}(x_{i}y_{i})=2^{r_{i}-1}+(r_{i}-1)2^{r_{i}-2}+2^{r_{i}-1}=(r_{i}+3)2^{r_{i}-2},$$
and $\tau_{H_{i}}(x_{i}y_{i};u_{i}v_{i})=\tau_{H_{r_{i}}-e_{i}}(x_{i}y_{i})=2^{r_{i}-1}$, so $d_{H_{r_{i}}}(x_{i}y_{i})=\frac{\tau_{H_{r_{i}}}(x_{i}y_{i})}{\tau(H_{r_{i}})}=\frac{(r_{i}+3)2^{r_{i}-2}}{2^{r_{i}-1}(2+r_{i})}=\frac{r_{i}+3}{2(2+r_{i})}$, for any $x_{i}y_{i}\in E(H_{r_{i}})$ is non-key edge of $G$. And we known $b_{G_{i}}(x_{i}y_{i};\ u_{i},v_{i})=\tau_{H_{i}}(x_{i}y_{i};u_{i}v_{i})$ combine with Lemma~\ref{lemma3}, for $x_{k}y_{k}\in E(R_{r_{k}})$
\begin{align*}
\tau_{G}(x_{k}y_{k})&=\prod\limits_{i=1}\limits^{n}\tau(G_{i})\left[\frac{b_{G_{k}}(x_{k}y_{k};\ u_{k},\
  v_{k})}{\tau(G_{k})}+d_{G_{k}}(x_{k}y_{k})\sum\limits_{i\neq k}d_{G_{i}}(u_{i}v_{i})\right]\\
  &=\prod\limits_{i=1}\limits^{n}2^{r_{i}-1}(2+r_{i})\left[\frac{2^{r_{k}-1}}{2^{r_{k}-1}(2+r_{k})}+\frac{r_{k}+3}{2(2+r_{k})}\sum\limits_{i\neq k}\limits^{n}\frac{2}{2+r_{i}}\right]\\
  &=\prod\limits_{i=1}\limits^{n}2^{r_{i}-1}(2+r_{i})\left[\frac{1}{2+r_{k}}-\frac{r_{k}+3}{2(2+r_{k})}\cdot\frac{2}{2+r_{k}}+\frac{r_{k}+3}{2(2+r_{k})}\sum\limits_{i=1}\limits^{n}\frac{2}{2+r_{i}}\right]\\
  &=\prod\limits_{i=1}\limits^{n}2^{r_{i}-1}(2+r_{i})\left[\frac{r_{k}+3}{2(2+r_{k})}\sum\limits_{i=1}\limits^{n}\frac{2}{2+r_{i}}-\frac{1}{(2+r_{k})^{2}}\right]
\end{align*}
\end{proof}

We now construct the the spanning tree edge dependence of $G=N(H_{0},H_{r_{2}},\cdots, H_{r_{n}})$.

\begin{thm}\label{tm3}
Let $p,\ q$ be positive integers, $\frac{q}{2}<p< q$. Let $G=N(H_{0},H_{r_{2}},\cdots, H_{r_{n}})$ such that
 $$\sum\limits_{i=2}\limits^{n}\frac{2}{r_{i}+2}=\frac{p}{q-p}\quad \mbox{and} \quad r_{i}\geq \frac{4q-6p}{2p-q}$$ for all $2\leq i\leq n$. Then $\mbox{dep}(G)=\frac{p}{q}$.
\end{thm}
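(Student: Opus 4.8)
The plan is to follow the strategy of the bipartite Theorem \ref{tm2}: I will show that the key edge $e_1$ has the largest density among all edges of $G$, so that $\mbox{dep}(G)=d_G(e_1)$, and then read off $d_G(e_1)=p/q$ from Theorem \ref{d{G}(e)}. Because every density $d_G(\cdot)$ carries the common positive denominator $\tau(G)$, it suffices throughout to compare the numerators $\tau_G(\cdot)$, all of which are supplied by Theorems \ref{d{G}(e)} and \ref{t{G}(xy)}.

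First I would dispose of the remaining key edges. For $2\leq k\leq n$ with $r_k\geq 1$ we have $d_{H_{r_k}}(e_k)=\frac{2}{2+r_k}\leq\frac{2}{3}<1=d_{H_0}(e_1)$, so the monotonicity Lemma \ref{lemma5} gives $d_G(e_k)<d_G(e_1)$. Hence no key edge other than $e_1$ can attain the maximum, and it remains only to compare $e_1$ with the non-key edges.

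The substantive step is this comparison. Fix a non-key edge $x_ky_k\in E(H_{r_k})$ and set $a_k=\frac{2}{2+r_k}$ and $S=\sum_{i=2}^{n}\frac{2}{2+r_i}=\frac{p}{q-p}$; note that the $H_0$ factor contributes $1$, so $\sum_{i=1}^{n}\frac{2}{2+r_i}=1+S$. Subtracting the formula of Theorem \ref{t{G}(xy)} from the value of $\tau_G(e_1)$ in Theorem \ref{d{G}(e)} and pulling out the common product $\prod_{i=1}^{n}2^{r_i-1}(2+r_i)$, the bracketed factor becomes
$$S-\left(\tfrac12+\tfrac{a_k}{4}\right)(1+S)+\tfrac{a_k^2}{4}.$$
The key algebraic identity is that this simplifies to $\tfrac14(2-a_k)\,(S-1-a_k)$, which follows from the factorization $2+a_k-a_k^2=(2-a_k)(1+a_k)$. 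Since $a_k\leq\tfrac23<2$, the factor $2-a_k$ is positive, so the sign of $\tau_G(e_1)-\tau_G(x_ky_k)$ is governed entirely by
$$S-1-a_k=\frac{2p-q}{q-p}-\frac{2}{2+r_k}.$$

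Finally, the hypothesis $p>q/2$ forces $2p-q>0$, so $S-1-a_k\geq 0$ is equivalent to $2+r_k\geq\frac{2(q-p)}{2p-q}$, that is, to $r_k\geq\frac{4q-6p}{2p-q}$, which is exactly the assumed bound. Therefore $\tau_G(e_1)\geq\tau_G(x_ky_k)$ for every non-key edge, whence $d_G(e_1)\geq d_G(x_ky_k)$, and combined with the key-edge comparison this yields $\mbox{dep}(G)=d_G(e_1)=p/q$. I expect the only real work to be the factorization of the bracketed factor and the verification that the threshold matches $\frac{4q-6p}{2p-q}$ on the nose; introducing $a_k=2/(2+r_k)$ at the outset is what renders the identity $2+a_k-a_k^2=(2-a_k)(1+a_k)$ transparent and keeps the computation short.
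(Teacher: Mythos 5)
Your proposal is correct and follows essentially the same route as the paper's proof: key edges are handled via Lemma \ref{lemma5}, and non-key edges by subtracting the formula of Theorem \ref{t{G}(xy)} from $\tau_G(e_1)$ of Theorem \ref{d{G}(e)}, with the sign analysis reducing to exactly the threshold $r_k\geq\frac{4q-6p}{2p-q}$. Your substitution $a_k=\frac{2}{2+r_k}$ and the factorization $\frac14(2-a_k)(S-1-a_k)$ are just a cleaner repackaging of the paper's factorization $\frac{r_k+1}{2(2+r_k)^2}\left[\frac{2p-q}{q-p}r_k-\frac{4q-6p}{q-p}\right]$ (and in fact your version avoids a small typo in the paper's penultimate line, where a factor $(2+r_k)$ is dropped from the denominator).
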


\begin{proof}
 The above conditions $p< q$ and $\sum\limits_{i=2}\limits^{n}\frac{2}{r_{i}+2}=\frac{p}{q-p}$ satisfy Theorem~\ref{d{G}(e)}, we have $d_{G}(e_{1})=\frac{p}{q}$. According to Lemma~\ref{lemma5}, we can get $d_{G}(e_{1})=1\geq d_{G}(e_{k})$, for any key edge $e_{k}$, with $2\leq k\leq n$. According to Theorem~\ref{d{G}(e)} and Theorem~\ref{t{G}(xy)}, for any non-key edge $x_{k}y_{k}\in E(H_{r_{k}})$,
\begin{align*}
&\tau_{G}(e_{1})-\tau_{G}(x_{k}y_{k})\\
&=\prod\limits_{i=1}\limits^{n}2^{r_{i}-1}(2+r_{i})\sum\limits_{i=2}\limits^{n}\frac{2}{2+r_{i}}-\prod\limits_{i=1}\limits^{n}2^{r_{i}-1}(2+r_{i})
\left[\frac{r_{k}+3}{2(2+r_{k})}\sum\limits_{i=1}\limits^{n}\frac{2}{2+r_{i}}-\frac{1}{(2+r_{k})^{2}}\right]\\
&=\prod\limits_{i=1}\limits^{n}2^{r_{i}-1}(2+r_{i})\left[\sum\limits_{i=2}\limits^{n}\frac{2}{2+r_{i}}-\frac{r_{k}+3}{2(2+r_{k})}\sum\limits_{i=1}\limits^{n}\frac{2}{2+r_{i}}+\frac{1}{(2+r_{k})^{2}}\right]\\
&=\prod\limits_{i=1}\limits^{n}2^{r_{i}-1}(2+r_{i})\left[\left(1-\frac{r_{k}+3}{2(2+r_{k})}\right)\sum\limits_{i=2}\limits^{n}\frac{2}{2+r_{i}}-\frac{r_{k}+3}{2(2+r_{k})}+\frac{1}{(2+r_{k})^{2}}\right]\\
&=\prod\limits_{i=1}\limits^{n}2^{r_{i}-1}(2+r_{i})\left[\frac{r_{k}+1}{2(2+r_{k})}\sum\limits_{i=2}\limits^{n}\frac{2}{2+r_{i}}-\frac{r_{k}+3}{2(2+r_{k})}+\frac{1}{(2+r_{k})^{2}}\right]\\
&=\prod\limits_{i=1}\limits^{n}2^{r_{i}-1}(2+r_{i})\left[\frac{r_{k}+1}{2(2+r_{k})}\frac{p}{q-p}-\frac{r_{k}+3}{2(2+r_{k})}+\frac{1}{(2+r_{k})^{2}}\right]\\
&=\prod\limits_{i=1}\limits^{n}2^{r_{i}-1}(2+r_{i})\left[\frac{(r_{k}+1)(2+r_k)}{2(2+r_{k})^2}\frac{p}{q-p}-\frac{(r_{k}+3)(2+r_k)-2}{2(2+r_{k})^2}\right]\\
&=\prod\limits_{i=1}\limits^{n}2^{r_{i}-1}(2+r_{i})\frac{r_k+1}{2(2+r_{k})}\left[(2+r_k)\frac{p}{q-p}-(r_{k}+4)\right]\\
&=\prod\limits_{i=1}\limits^{n}2^{r_{i}-1}(2+r_{i})\frac{r_k+1}{2(2+r_{k})}\left(\frac{2p-q}{q-p}r_{k}-\frac{4q-6p}{q-p}\right)\\
&\geq\prod\limits_{i=1}\limits^{n}2^{r_{i}-1}(2+r_{i})\frac{r_k+1}{2(2+r_{k})}\left(\frac{2p-q}{q-p}\cdot\frac{4q-6p}{2p-q}-\frac{4q-6p}{q-p}\right)\\
&=0.
\end{align*}
Since for any non-key edge $x_{k}y_{k}\in E(H_{r_{k}})$, $0\leq k\leq n$, we obtain that $\tau_{G}(e_{1})>\tau_{G}(x_{k}y_{k})$. Then $d_{G}(e_{1})>d_{G}(x_{k}y_{k})$, so $\mbox{dep}(G)=max_{e\in E(G)}d_{G}(e)=d_{G}(e_{1})=\frac{p}{q}$.
\end{proof}

\begin{cor}\label{cor}
For any rational number $\frac{1}{2}<\frac{p}{q}<1$, $p$, $q$ are positive integers, there exists a planar graph $G$, such that $\mbox{dep}(G) = \frac{p}{q}$.
\end{cor}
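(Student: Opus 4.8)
The plan is to deduce this corollary directly from Theorem~\ref{tm3} by verifying that its two hypotheses on the parameters can always be met with an explicit choice of positive integers. Observe first that the constraint $\frac{1}{2}<\frac{p}{q}<1$ is exactly $\frac{q}{2}<p<q$, which is the standing assumption of Theorem~\ref{tm3}; in particular $2p-q>0$, so the threshold $L:=\frac{4q-6p}{2p-q}$ is a well-defined finite number, and $\frac{p}{q-p}>1$ since $p>q-p$. Hence the entire content of the corollary reduces to the feasibility question: can one find an integer $n\geq 2$ and positive integers $r_2,\dots,r_n$ with
\begin{equation*}
\sum_{i=2}^{n}\frac{2}{r_i+2}=\frac{p}{q-p}\qquad\mbox{and}\qquad r_i\geq L\quad (2\leq i\leq n)\,?
\end{equation*}

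To settle this I would use a uniform construction. Fix a large positive integer $M$, set $n-1=pM$, and take all parameters equal to the common value $r=2M(q-p)-2$. Then the sum condition holds on the nose, since
\begin{equation*}
\sum_{i=2}^{n}\frac{2}{r_i+2}=pM\cdot\frac{2}{2M(q-p)}=\frac{p}{q-p}.
\end{equation*}
Because $r=2M(q-p)-2\to\infty$ as $M\to\infty$ (recall $q-p\geq 1$), choosing $M$ sufficiently large guarantees simultaneously that $r\geq 1$, so that each $H_{r}$ is a legitimate graph with $r\geq 1$ as required in Theorems~\ref{t{G}(xy)} and~\ref{tm3}, and that $r\geq L$, so that the lower-bound hypothesis is satisfied for every index $i$.

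Finally, each $H_{r}$ is planar and, as already noted in the text, the necklace of planar graphs is planar, so $G=N(H_0,H_{r},\dots,H_{r})$ is a planar graph; moreover it is simple, since the $r$ connections between $u_i$ and $v_i$ are internally disjoint paths of length two and the key edge $u_iv_i$ is a single edge, so no multiple edges arise. Applying Theorem~\ref{tm3} to this $G$ then yields $\mbox{dep}(G)=\frac{p}{q}$, which is exactly the assertion. The only genuine obstacle is meeting the exact-sum and lower-bound constraints simultaneously with integers, and the uniform choice resolves it cleanly: enlarging $M$ drives the common value $r$ past the threshold $L$ while keeping the total pinned at $\frac{p}{q-p}$.
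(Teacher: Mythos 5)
Your proposal is correct and takes essentially the same route as the paper: the corollary is read off directly from Theorem~\ref{tm3}, which the paper states without further argument. Your uniform choice $n-1=pM$ and $r_i=2M(q-p)-2$ with $M$ large is a clean explicit witness that the hypotheses $\sum_{i=2}^{n}\frac{2}{r_i+2}=\frac{p}{q-p}$ and $r_i\geq\frac{4q-6p}{2p-q}$ can always be met simultaneously by positive integers (and, since large $M$ forces $n\geq 3$, the resulting necklace is genuinely a simple planar graph), so it supplies the feasibility detail the paper leaves implicit.
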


\section{Concluding remarks}
In this paper, we show that all rational spanning tree edge dependences are constructible for bipartite graphs, which completely solve the first conjecture of Kahl. For the second conjecture of Kahl, we show that the conjecture is true for planar multigraphs. However, for (simple) planar graphs, the conjecture is not true. We show that for any rational number $p/q$ such that $p/q\leq 1/3$, the dependence $p/q$ is not constructible via planar graphs. On the other hand, for $1/2<p/q$, we show that the dependence $p/q$ is constructible via planar graphs. Thus, for $1/3<p/q\leq 1/2$, it still remains an open question whether the dependence $p/q$ is constructible with planar graphs. It deserves further discussing and studying in the future. What is more, it is of special interest to determine the minimum rational number $p/q$ such that $p/q$ is constructible via planar graphs. So we propose the following question.

{\bf Question.} Which is the minimum rational number $0<p/q<1$ such that $p/q$ is constructible via planar graphs?

\section{Ackonwledgements}
The support of the National Natural Science
Foundation of China (through grant no. 12171414) and the project ZR2019YQ02 by the Shandong
Provincial Natural Science Foundation, is greatly acknowledged.

\bibliographystyle{plain}

\end{document}